\newcommand{\Tper}{T_{\rm period}}
\newcommand{\CC}{{\mathbb C}}
\newcommand{\cN}{{\mathcal N}}
\newcommand{\cNcr}{{\mathcal N}_{\rm cr}}
\newcommand{\RR}{{\mathbb R}}
\newcommand{\Hduff}{H_{\rm Duffing}}
\newcommand{\HDW}{H_{\rm DW}}
\newcommand{\W}{\Omega}
\newcommand{\w}{\omega}
\theoremstyle{plain}
\newcommand{\abs}[1]{\left\lvert#1\right\rvert}
\newcommand{\norm}[1]{\left\lVert#1\right\rVert}
\newcommand{\nn}{\nonumber}
\newcommand{\dd}{\sigma}
\newcommand{\cn}{\operatorname{cn}}
\newcommand{\dn}{\operatorname{dn}}
\newcommand{\sn}{\operatorname{sn}}
\newcommand{\NcrFD}{N_{\rm cr}}
\newtheorem{thm}{Theorem}[section]
\newtheorem{prop}{Proposition}[section]
\newtheorem{lem}[prop]{Lemma}
\theoremstyle{definition}
\newtheorem{rem}{Remark}[section]
\numberwithin{equation}{section}
\def\squarebox#1{\hbox to #1{\hfill\vbox to #1{\vfill}}}
\newcommand{\ren}{{\hspace{1pt}{\rm ren}}}
 \definecolor{skyblue}{rgb}{0.85,0.85,1}
    \newcommand{\bigO}[1]{\ensuremath{\mathop{}\mathopen{}\mathcal{O}\mathopen{}\left(#1\right)}}
\title[Self-trapping and Josephson tunneling solutions to NLS]
{Self-trapping and Josephson tunneling solutions to the nonlinear Schr\"odinger / Gross-Pitaevskii Equation}  
\author[R.H. Goodman]
{Roy H. Goodman}
\email{goodman@njit.edu}
\address{Department of Mathematical Sciences, New Jersey Institute of Technology \\
University Heights, Newark, NJ  07102, USA}
\author[J.L. Marzuola]
{Jeremy L. Marzuola}
\email{marzuola@math.unc.edu}
\address{Mathematics Department, University of North Carolina, \\
Phillips Hall, CB\#3250, Chapel Hill, NC 27599, USA}
\author[M. I. Weinstein]
{Michael I. Weinstein}
\email{miw2103@columbia.edu}
\address{Department of Applied Physics and Applied Mathematics, Columbia University \\
200 S. W. Mudd, 500 W. 120th St., New York City, NY 10027, USA}
\begin{document}    

%\TWO{RE: (B) and (C) The abstract has been modified to use mathematical terms in place of optical terms, so as not to lead to confusion with BEC terminology in title.}
\begin{abstract}
We study the long-time behavior of solutions to the nonlinear Schr\"odinger / Gross-Pitaevskii equation (NLS/GP) with a symmetric double-well potential.  NLS/GP governs nearly-monochromatic guided optical beams in weakly coupled waveguides with both linear and nonlinear (Kerr) refractive indices and zero absorption, as well as the behavior of Bose-Einstein condensates.  For small $L^2$ norm (low power), the solution executes beating oscillations between the two wells. There is a power threshold at which a symmetry breaking bifurcation occurs.  The set of guided mode solutions splits into two families of solutions. One type of solution is concentrated in either well of the potential, but not both. Solutions in the second family undergo tunneling oscillations between the two wells.  A finite dimensional reduction (system of ODEs) derived in~\cite{MW} is expected to well-approximate the PDE dynamics on long time scales. In particular, we revisit this reduction, find a class of exact solutions and shadow them in the (NLS/GP) system by applying the approach of~\cite{MW}.
\end{abstract}

\date{\today}
\maketitle 

\section{Introduction}
%\TWO{Re: ``Cons'' list: Throughout, when referencing a specific result from~\cite{MW}, appropriate equation, theorem or section numbers have been added.}

We study the long-time behavior of solutions to the nonlinear Schr\"odinger / Gross-Pitaevskii equation (NLS/GP) with a symmetric double-well potential in $\RR^1$.  Equations of NLS/GP-type, in one or more spatial dimensions, arise as models of many physical systems, notably (a) nearly-monochromatic guided optical beams in weakly coupled waveguides with both linear and nonlinear (Kerr) refractive indices and no absorption~\cite{Boyd:2008,Newell:2003}, and (b) the time-evolution of a Bose-Einstein condensate (BECs) confined by a magnetically-induced linear potential~\cite{Chen1,Chen2,HolmerChen,ESY,PS:03}.
%\TWO{Re (D): References updated accordingly.}

Specifically, we consider NLS/GP in one space dimension
\begin{equation}
\label{eqn:nlsdwp-gK}
i \partial_t u(x,t) = \left(- \partial_x^2 + V_\ell(x)\right) u(x,t)\ +\ g  \abs{u(x,t)}^2 u(x,t), \; u(x,t) : \RR \times \RR \to \CC, 
\end{equation}
where   $V_\ell(x)$ denotes a double well potential. In the context of optics, $u(x,t)$ denotes the complex-valued slowly varying envelope of the electric field for a nearly monochromatic stationary beam propagating in the ``$t$'' direction. The potential $V_\ell(x)$ is obtained from the transverse refractive index  profile of the two coupled waveguides and $g<0$  is proportional to the Kerr nonlinear coefficient. In the macroscopic quantum setting, $u(x,t)$ denotes a macroscopic (mean-field) wave-function, $V_\ell$, the magnetic trap and $g$, which is proportional  to the microscopic two-body scattering length, can be positive or negative, depending on the underlying atomic species.

A key  quantity conserved by solutions of~\eqref{eqn:nlsdwp-gK} is the squared $L^2$ norm, 
\begin{equation}
\cN \ \equiv \  \int_{-\infty}^{\infty} |u(x,t)|^2 dx =  \int_{-\infty}^{\infty} |u(x,0)|^2 dx 
\label{powerN}
\end{equation}
corresponding in the two systems to (a) the optical power, conserved with propagation distance, and (b) the particle number, conserved under the time evolution. The aim of this paper is to describe how long term dynamics of specific solutions of equation~\eqref{eqn:nlsdwp-gK} vary with $\cN$, extending the result of~\cite{MW}.

\subsection{Background and prior results}

\subsubsection*{Linear theory of double wells}
We take the double-well, $V_\ell(x)$,  to be bimodal, i.e.\ the sum of translates of a unimodal potential $V_0(x)$,
\begin{equation*}
V_\ell(x) = V_0 (x-\ell) + V_0 (x+\ell),\; \ell>0.
\end{equation*}
We shall assume that the basic well,  $V_0(x)$, is spatially localized and even with respect to $x=0$:
\[ V_0(x) = V_0(-x),\]
and supports exactly one discrete eigenpair, $(\Omega_\star,\varphi_\star(x))$ which solves
 \begin{equation*}
 \left(-\partial_x^2+V_0(x)\right)\varphi_\star\ =\ \Omega_\star\varphi_\star.
%\label{Omega-phi-star}
 \end{equation*}
Without loss of generality, we assume $\|  \varphi_\star \|_{L^2} =1$.

For $\ell$ large and positive, detailed information  for the double-well Schr\"odinger operator  
\begin{equation*}
H_\ell = -\partial_x^2 + V_\ell.
%\label{HLdef}
\end{equation*}
can be deduced from the properties of the basic single well, $V_0(x)$~\cite{Har:80}. 
In particular, there is a well-separation distance, $\ell_0>0$, such that  if  $\ell>\ell_0$ (weak-coupling), then the linear operator $H_\ell$ has  two simple eigenvalues, $\W_0 = \W_0 (\ell)$ and $\W_1 = \W_1 (\ell)$, with 
\begin{equation*}
\W_0(\ell) \ <\;W_\star\ <\;W_1(\ell) \text{and } \W_1(\ell)-\W_0(\ell)\ =\ 
\bigO{e^{-c_0\ell}},
%\label{W12}
\end{equation*}
for some $c_0>0$.
%\ONE{Use of $l$ changed to $\ell$ for consistency.}
By the symmetry of $V_\ell$, the corresponding eigenfunctions  $\psi_j=\psi_j(x;\ell)$, satisfying
%  \begin{align}
%&
$$
H_\ell \psi_j = \W_j \psi_j, \ j = 0,1
$$
%\end{align}
may be taken to be  $L^2$-normalized and possess, respectively, even and odd spatial symmetry:
\begin{equation*}
\psi_0(x)=\psi_0(-x),\; \psi_1(x)=-\psi_1(-x).
%\label{psi01}
\end{equation*}
For sufficiently large $\ell$, these eigenfunctions are bimodal and satisfy
\begin{equation}
\psi_j(x;\ell)\; \approx\frac{1}{\sqrt{2}}\left(\varphi_\star(x-\ell)\ +\ {(-1)}^j \varphi_\star(x+\ell)\ \right), \; \ell\gg1, \; j=0,1.
\label{varphi-approx}
\end{equation}
%The solution $\psi_0$ possesses even symmetry and $\psi_1$, odd symmetry.

\subsubsection*{Nonlinear standing waves}

There has been a great deal of interest in the existence, stability, and bifurcations of stationary solutions to NLS/GP~\eqref{eqn:nlsdwp-gK} of standing wave type:
\begin{equation*}
u(x,t) = U(x) e^{-i \omega t}.
%\label{standingwave}
\end{equation*}
For fixed value $\cN>0$, $U=U(x;\cN)$ and $\w = \w(\cN)$ denotes a solution of the nonlinear eigenvalue problem defined jointly by equation~\eqref{powerN} and by
\begin{equation}
\left(- \partial_x^2 + V_\ell(x)\right) U\ + g |U|^2 U =\omega U  ,\ U\in H^1(\mathbb{R}).\label{eqn:stationary}
\end{equation}

Questions of both mathematical and physical interest are:
\begin{enumerate}
\item Given $\cN>0$, classify the  solutions of~\eqref{eqn:stationary}.
\item  How does the set of solutions vary as $\cN$ is increased?
\end{enumerate}

In~\cite{KKSW}, it is shown that there exist solution branches which are continuations of the linear solutions as $\cN\to0$, i.e.\ such that for $j=0,1$,  $U_j(x;\cN) \sim \sqrt{\cN} \psi_j(x)$ and $\w \to \W_j$, and that these branches possess the same symmetries as their linear counterparts. In the case of focusing nonlinearity ($g<0$), they find that the ``ground state solution,'' the continuation of the linear ground state $\psi_0(x)$,  undergoes  a symmetry-breaking bifurcation as $\cN$ is increased. Fix $\ell>\ell_1$ sufficiently large.  Then, there exists a threshold power / particle number, $\cNcr(\ell)$, such that for $0<\cN<\cNcr(\ell)$, the only solutions to equation~\eqref{eqn:stationary} are $U_0(x;\cN)$ and $U_1(x;\cN)$, but for $\cN> \cNcr$, there exists another pair of solutions $U_{\pm}(x;\cN)$, concentrated in the left or in the right well but not symmetrically in both. As $\cN$ is increased further, the solution becomes concentrated more strongly in one well or the other. 
Moreover, for $\cN>\cNcr$, stability is transferred from the ground state $U_0(x;\cN)$ to the asymmetric states $U_\pm(x;\cN)$.

The analysis of~\cite{KKSW} is based on a Lyapunov-Schmidt reduction. 
For $\cN>0$ small, solutions are decomposed into their projections onto the span of $\{\psi_0,\psi_1\}$ and their projection onto the orthogonal complement. For $\ell$ large, one may view this system as a two-dimensional system of nonlinear algebraic equations, which is weakly coupled to an infinite dimensional system. The two-dimensional truncated system has a \emph{bifurcation diagram} of the type described above and it can be shown that neglected (infinite-dimensional) corrections are small for $\ell$ large. In particular, if $N_{\rm cr}(\ell)$ denotes the approximate symmetry breaking threshold obtained from the 2-dimensional reduction, then we have that 
$$ |N_{\rm cr}(\ell) - \cNcr(\ell)| \leq \mathcal{O} ( |\Omega_0 (\ell) - \Omega_1 (\ell) |) \approx e^{- \kappa \ell}, \ \text{for some} \ \kappa > 0,$$
see~\cite{KKSW}, Equation $(1.1)$.
%\ONE{Relationship between $ N_{\rm cr}(\ell)$ and $\cNcr(\ell)$ clarified.}

 \begin{rem}
Throughout this paper we shall assume $g<0$, the case of focusing nonlinearity. For the defocusing nonlinearity $g>0$, an analogous result holds with one difference. It is the mode $U_1 (x,\cN)$ that undergoes a symmetry-breaking bifurcation as $\cN$ is increased; see for example~\cite{TKFS}.
\end{rem}

The standing-wave result has been generalized in several ways. Kirr, Kevrekidis, and Pelinovsky~\cite{KKP} perform a global bifurcation analysis for the class of symmetric double well potentials with a non-degenerate maximum.  
Yang, in~\cite{Yang:2012,Yang:2012to},  studies the detection and classification of symmetry-breaking bifurcations in NLS equations with more general nonlinearities.
In~\cite{KKC}, Kapitula et al.\ study the richer family of bifurcations for NLS with triple well potentials.

%seen here arise instead as saddle-node bifurcations for an array of three potential wells. 

\subsubsection*{Time-dependent dynamics}
To study the dynamics of solutions to NLS/GP near the symmetry breaking bifurcation it is natural to express the solution to the initial value problem as
\begin{equation}
u(x,t) = c_0(t) \psi_0(x) + c_1(t) \psi_1(x) + \rho(x,t),\; \left\langle \psi_j(\cdot),\rho(\cdot,t)\right\rangle=0,\ j=1,2,
\label{eqn:ansatz}
\end{equation}
where $c_0(t)$ and $c_1(t)$ are time-dependent complex-valued amplitudes and $\rho(x,t)$, the projection of $u(x,t)$ onto $\operatorname{span}^\perp\{\psi_0,\psi_1\}$. The resulting system consists of ODEs for $c_0(t)$ and $c_1(t)$ coupled to a PDE solved by $\rho(x,t)$ and is mathematically equivalent to NLS/GP.

%\RG{Changed first sentence to acknowledge that JLM and MIW are the authors of the referenced work. Also added statement about symmetry reduction so that statement in next paragraph about fixed points is correct.}
In~\cite{MW}, the latter two authors pursue the following strategy. First, they derive a finite-dimensional model by neglecting the terms involving $\rho(x,t)$ in the evolution equations for $c_0(t)$ and $c_1(t)$. This is reduced by symmetry to an ODE resembling the Duffing equation, which is studied by phase-plane methods. Second, they show that solutions to the full PDE system shadow the ODE solutions on very long time-scales.

The standing wave solutions $U_j(x;\cN)$ and $U_\pm(x;\cN)$ correspond to fixed points of this reduced system. The stability of the standing waves corresponds to that of these fixed points, as shown in~\cite{KKSW}. Since the reduced system is conservative and two-dimensional, stable fixed points are surrounded by families of nested periodic orbits. The result proven in~\cite{MW} is that the periodic orbits sufficiently close to these stable fixed points are shadowed by solutions to~\eqref{eqn:nlsdwp-gK} over long but finite times. {\it The result of the present paper is to extend the shadowing theorem to other periodic solutions of the ODE that are not confined to small neighborhoods of stable fixed points.}\@ This is explained further using the phase portrait in Section~\ref{sec:qualitative} below.

In recent related work, Pelinovsky and Phan~\cite{PP} use a different reduction ansatz that leads to the standard Duffing oscillator. 
They control shadowing of a wider class of orbits than in~\cite{MW} using only energy-type estimates and Gronwall's inequality in large data and arbitrary~$\ell$ regimes provided there exist two distinct eigenvalues for~$H_\ell$.  On the other hand, in the decomposition~\eqref{eqn:ansatz}, the representation of the initial conditions is more straightforward.   In the present work, we furthermore extend the results of~\cite{MW} to include orbits outside the separatrix, expanding the class of orbits we can shadow to be much closer to that of~\cite{PP}.

\subsection{Qualitative discussion of results}
\label{sec:qualitative}
\subsubsection*{The finite-dimensional model}
In~\cite{MW}, by neglecting coupling to $\rho(t)$, equation~\eqref{eqn:nlsdwp-gK} is viewed as a perturbation of the following two degree of freedom Hamiltonian ODE system for the evolution of $\left(c_0(t),c_1(t)\right)$. Under the change of variables~\cite{KW:1992,SR-K:2005}
\begin{equation}
\label{eqn:c0c1}
\begin{split}
c_0(t) &= A(t) e^{i \vartheta(t)}, \\
c_1(t) & = (\alpha(t) + i \beta(t)) e^{i \vartheta(t)},
\end{split}
\end{equation}
the evolution of the overall phase $\vartheta(t)$ decouples from the evolution of the other three quantities, due to the phase invariance of the underlying physical system, to give
\begin{subequations}
 \begin{equation}
\label{eqn:dy}
\left\{
\begin{aligned}
\dot{\alpha} &= \left(\Omega_{10}+2\alpha^2\right)\, \beta , \\
\dot{\beta} &= -\left(\Omega_{10}+2\alpha^2-2A^2\right) \alpha , \\
\dot{A} &= -2 \alpha \beta A ,
\end{aligned}
\right.
\end{equation}
and
 \begin{equation}
\dot{\vartheta} = \omega - \Omega_0 + A^2+3\alpha^2+\beta^2,
\label{varthetadot}
\end{equation}
\label{syst-rot}
\end{subequations} 
for
 \begin{equation}
\Omega_{10}= \Omega_1 - \Omega_0.
\label{omega10}
\end{equation}
We can assume $A\ge0$ with $A=0$ only on the invariant circle $\alpha^2+\beta^2=N$.
%\TWO{Re: (F) Definition of $\Omega_{10}$ moved upward.}
%, which is also the outer boundary of the physically meaningful region of the phase plane.

As a shorthand for these coordinates, we define a four-dimensional vector and its three-dimensional truncation 
\begin{equation*}
%\label{eqn:chidef}
\chi(t) = (\alpha,\beta,A,\vartheta) \; \text{and } \tilde\chi(t) = (\alpha,\beta,A).
\end{equation*} 
A direct consequence of~\eqref{eqn:dy} is the conserved quantity
\begin{equation}
N =A^2 + \alpha^2 + \beta^2= \abs{c_0}^2 + \abs{c_1}^2,
\label{N}
\end{equation} 
corresponding to the $L^2$ invariance of~\eqref{eqn:nlsdwp-gK}.
The constraint~\eqref{N} can be used to further reduce~\eqref{eqn:dy} to the $2$-dimensional system
\begin{equation}
\label{eqn:alphabeta}
\frac{d}{dt}
\begin{bmatrix}
\alpha \\
\beta \end{bmatrix}
=  \begin{bmatrix}
\W_{10} \beta \\
(2N-\Omega_{10}) \alpha 
\end{bmatrix}  + 
\begin{bmatrix}
2 \alpha^2 \beta \\
-4 \alpha^3 - 2 \alpha \beta^2  
\end{bmatrix} = J \nabla \HDW,
\end{equation}
where 
\begin{equation}
\HDW(\alpha,\beta) = \frac{\W_{10}}{2}\beta^2 + \frac{\sigma}{2} \alpha^2 +\alpha^4 + \alpha^2 \beta^2 \ \text{and} \ J  = \begin{bmatrix} 0&1\\-1&0 \end{bmatrix}
\label{HDW}
\end{equation}
 for 
 \begin{equation}
\Omega_{10}= \Omega_1 - \Omega_0, \; 
\sigma=\Omega_{10}-2 N.
\label{sigma}
\end{equation}
Note that~\eqref{HDW} is a family of Hamiltonian systems parametrized by $N$.

The Hamiltonian $\HDW$ differs from the standard Duffing Hamiltonian,
\begin{equation}
\Hduff =\frac{1}{2}p^2 + \frac{\sigma}{2} q^2 + \frac{1}{4}q^4,
\label{eqn:duffing}
\end{equation} 
by the presence of the mixed term $\alpha^2\beta^2$.

%, which is the normal form for the Hamiltonian pitchfork (symmetry breaking) bifurcation, 
%we see these two systems differ only  term.

The phase plane of system~\eqref{eqn:alphabeta} is displayed in Figure~\ref{pitchfork}.  It is topologically equivalent to that of the Duffing system~\eqref{eqn:duffing} for either sign of $\sigma$. For  $\sigma<0$, the phase plane is foliated by concentric closed curves enclosing the fixed point at the  origin; see Figure~\ref{pitchfork}(a). For $\sigma>0$, the origin becomes unstable and two stable fixed points appear, one on either side of the origin; Figure~\ref{pitchfork}(b).  In this regime, the dynamics feature two types of periodic orbits. There are two families of smaller periodic orbits, encircling exactly one of the stable fixed points and a second family of periodic orbits encircling all three fixed points.

\begin{figure}[htbp] %  figure placement: here, top, bottom, or page
   \centering
   \includegraphics[width=3in]{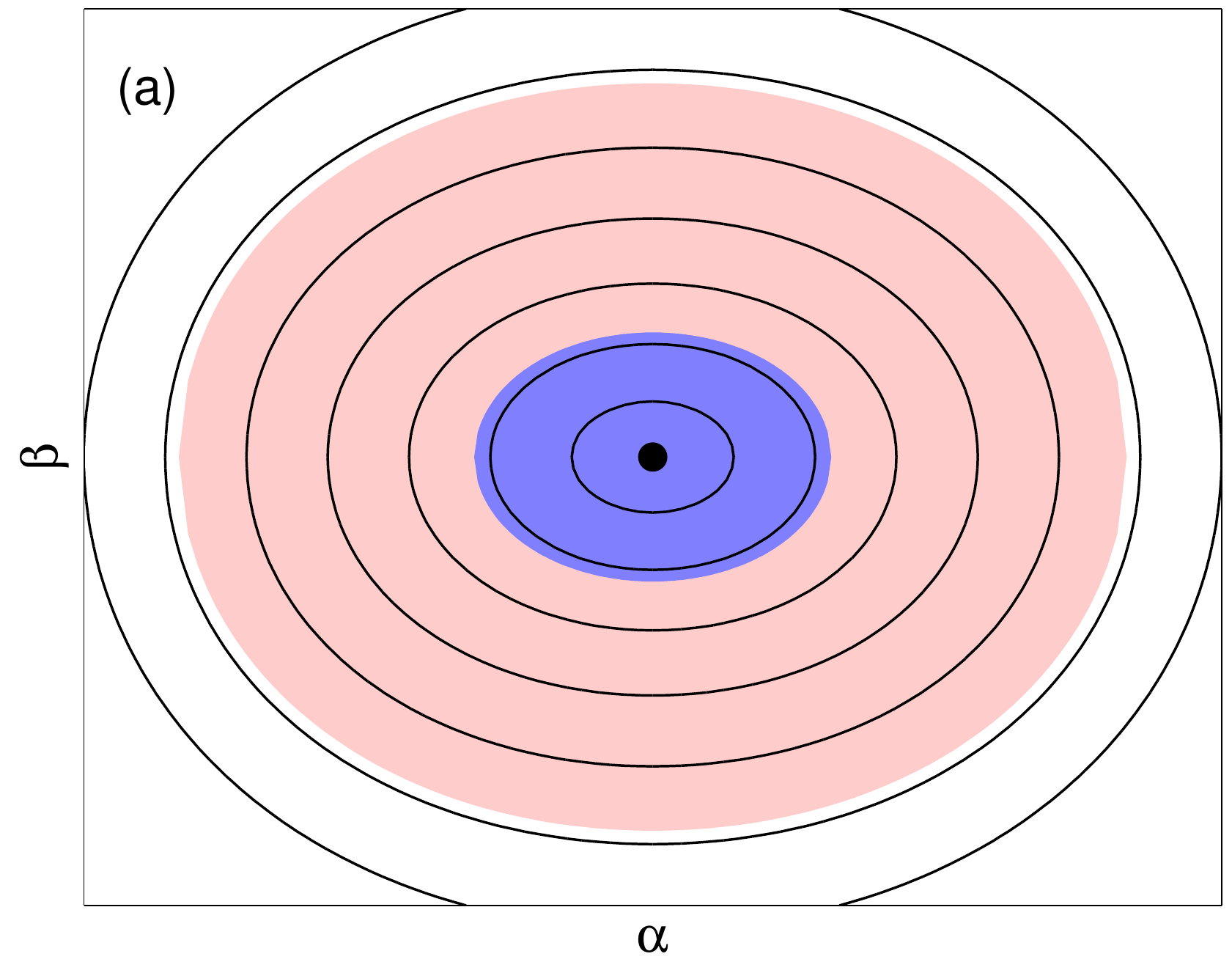} 
      \includegraphics[width=3in]{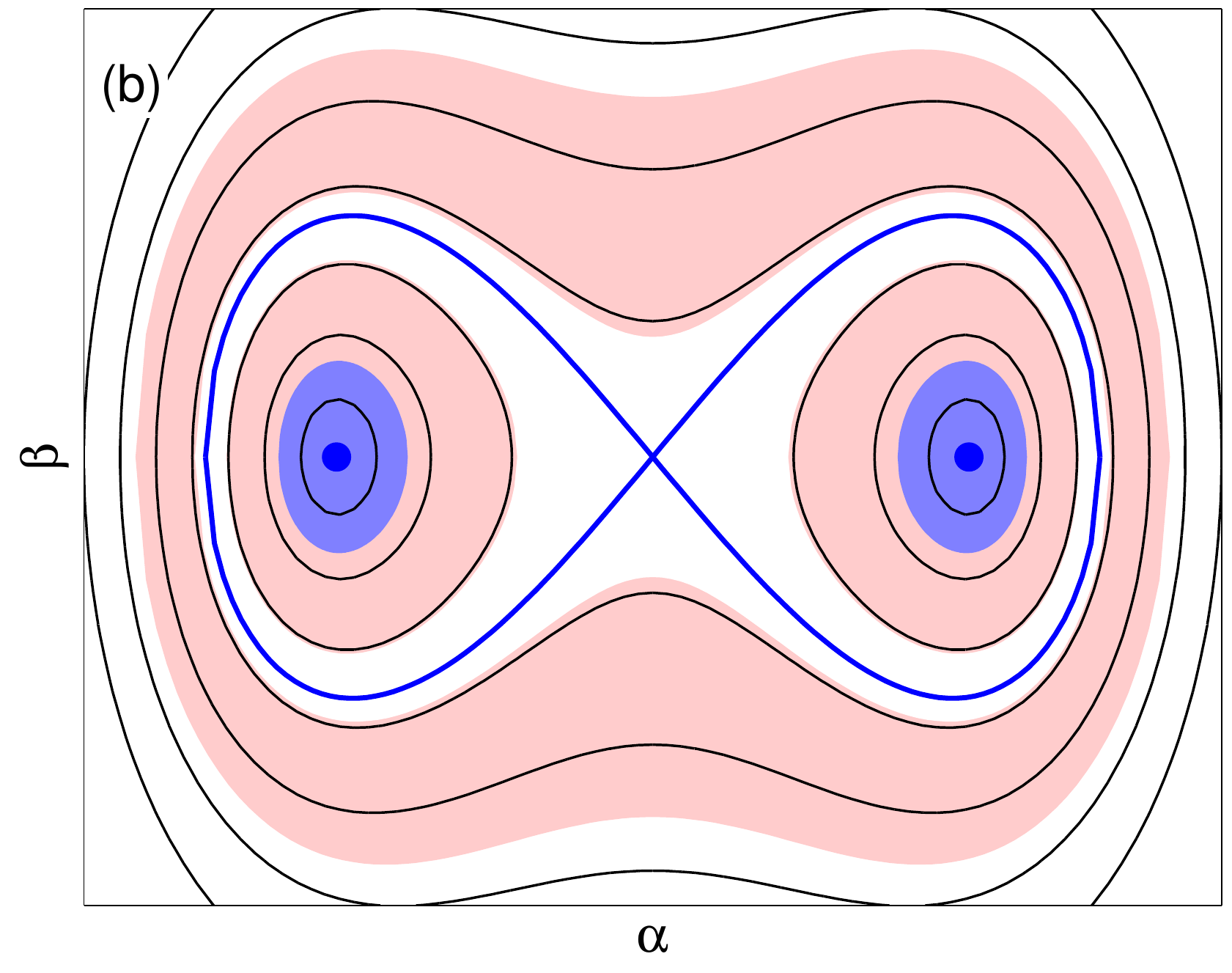} 
   \caption{The phase plane of equation~\eqref{eqn:alphabeta} with (a) $N<\NcrFD$ and (b) $N>\NcrFD$. The blue (darker) shaded regions represent the domain of validity of the proof in~\cite{MW} and the pink (lighter) shaded regions, the domains of validity in this paper.}
\label{pitchfork}
\end{figure}

Control of large time behavior for solutions to~\eqref{eqn:nlsdwp-gK} requires precise estimates of the period and amplitude of the periodic orbits in the reduced dynamical system.  These estimates are straightforward for separable Hamiltonians of the form 
$$H = p^2 + V(q),$$ 
such as the Duffing oscillator.  Although $\HDW$ is not separable, we nevertheless obtain closed-form periodic orbits in Section~\ref{s:exact}.  These are used to obtain appropriate estimates on the periods and amplitudes required for the shadowing analysis in Section~\ref{sec:rescaled}.

\subsubsection*{Interpretation of the ODE solutions}

Our results describe orbits which have an interpretation in quantum and electromagnetic contexts.   In the electromagnetic context, the orbits we study represent nearly monochromatic beams within neighboring wave-guides exchanging energy.  

For quantum settings, by examining the form of the eigenfunctions~\eqref{varphi-approx}, the solution anszatz~\eqref{eqn:ansatz}, and the reduction~\eqref{eqn:c0c1}, notice that when $\alpha>0$ and $\beta=0$, the basis functions~$\psi_0$ and $\psi_1$ interfere constructively in the left well, centered at $x=-L$ and destructively in the right well, centered at $x=L$, so that most of the optical power is concentrated in the left well. When $\alpha<0$, this is reversed and most of the power is concentrated in the right well. Thus, for the two families of periodic orbits encircling only one of the two fixed points in figure~\ref{pitchfork}(b), most energy stays in one or the other of the two potential wells. Solutions of this type are called  \textbf{self-trapped} in the Bose-Einstein condensate literature~\cite{Albiez:2005}.

For the two families of periodic orbits in which $\alpha(t)$ changes sign, both for $N<\NcrFD$ and for solutions outside the separatrix for $N>\NcrFD$, the location and magnitudes of the maximum and minimum of the reconstructed solution alternate with a fixed period. In the subcritical case $N<\NcrFD$ of Figure~\ref{pitchfork}(a), the phase difference between $c_0$ and $c_1$ changes slowly due to the closeness of the frequencies $\W_0$ and $\W_1$. For small-amplitude periodic orbits, this is a manifestation of the common beating phenomenon, while for solutions further from the origin, we refer to this as \textbf{nonlinear beating}. Periodic orbits encircling all three fixed points in Figure~\ref{pitchfork}(b) are known as \textbf{Josephson tunneling} solutions in the BEC literature, where the word tunneling refers to the fact that these orbits must cross a local maximum of the potential energy to travel between the two wells. Note that the nonlinear beating solutions speed up as they cross $\alpha=0$, while the Josephson tunneling solutions slow down. At large amplitude, there is no practical difference between Josephson tunneling and nonlinear beating solutions.  Both self-trapped and Josephson tunneling solutions have been directly observed in Bose-Einstein condensates by Albiez et al.~\cite{Albiez:2005}. 

It is instructive, also, to construct approximate periodic solutions using equations~\eqref{eqn:ansatz} and~\eqref{eqn:c0c1}
corresponding to the different families of trajectories displayed in Figure~\ref{pitchfork}. These are shown in Figures~\ref{reconstructed_sub} and~\ref{reconstructed_super}.
Figure~\ref{reconstructed_sub} shows two nonlinear beating solutions  in the subcritical regime $N< \NcrFD$. The first is a small periodic orbit around to the origin, in which case the solution stays very close to the symmetric mode $\psi_0$. In the second, the values of $A$ and $\abs{\alpha(t)+i \beta(t)}$ are close, and the solution migrates almost completely from the right well to the left well each period. 

\begin{figure}[htbp] %  figure placement: here, top, bottom, or page
   \centering
   \includegraphics[width=.45\textwidth]{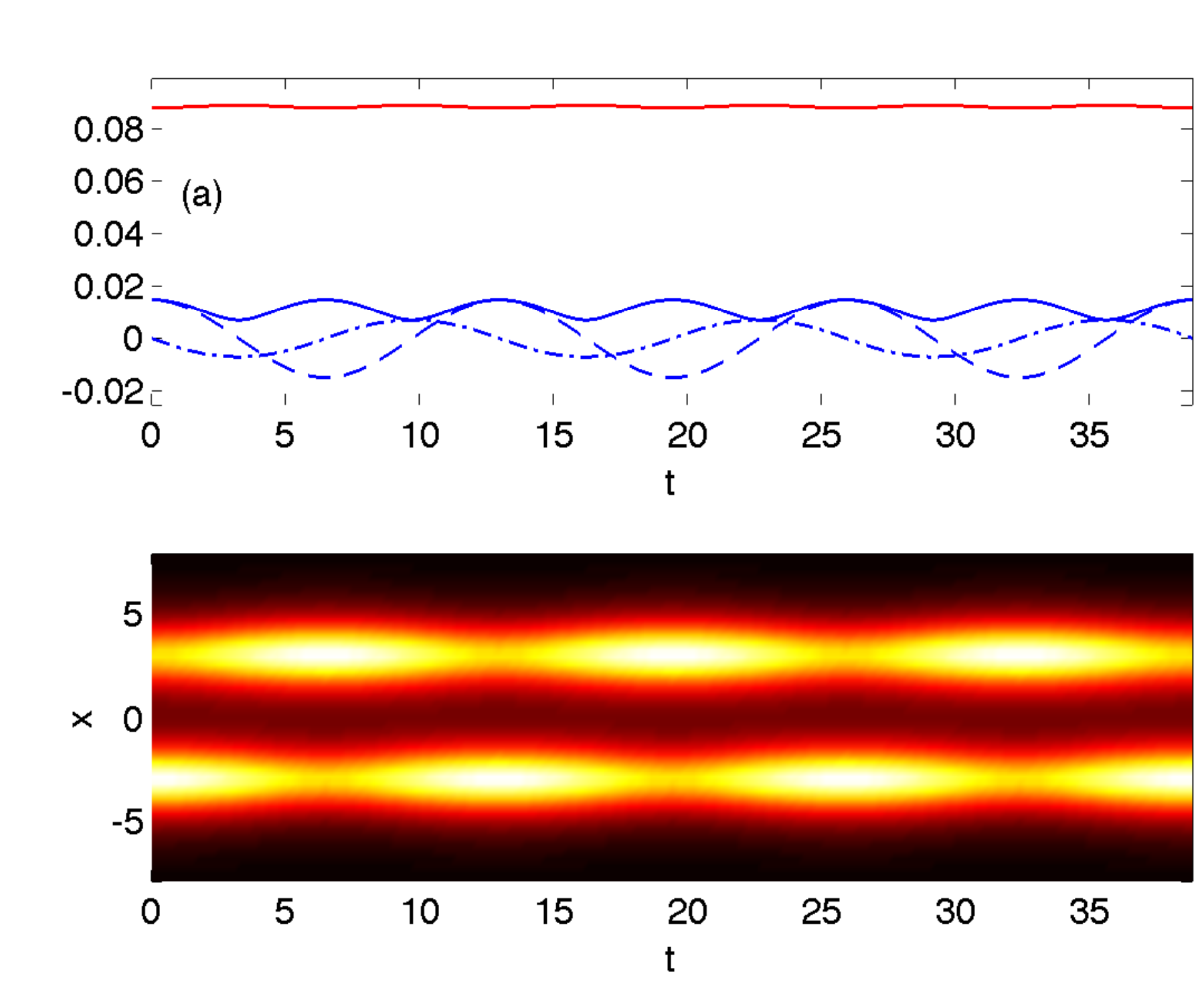}
   \includegraphics[width=.45\textwidth]{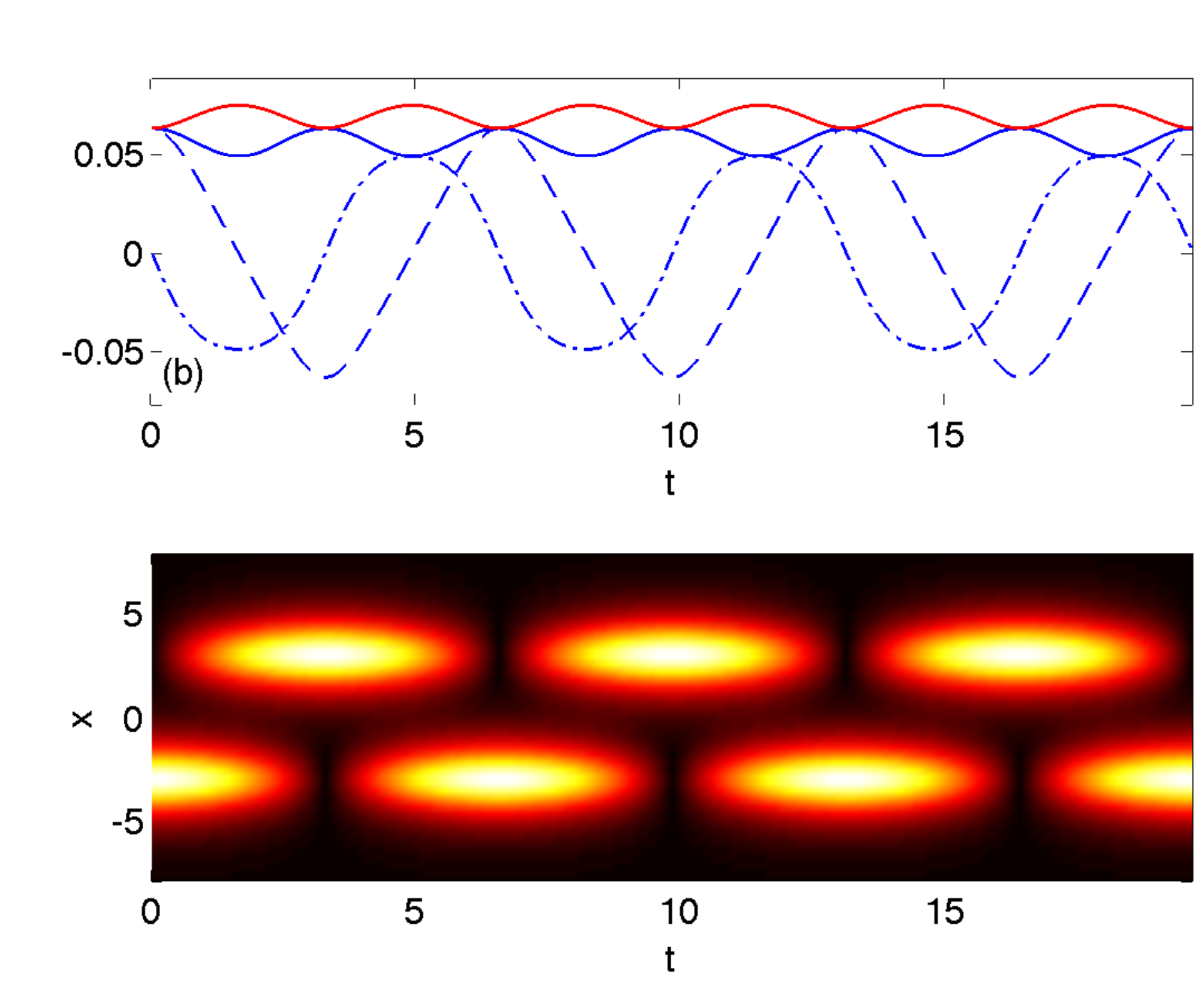}
   \caption{Nonlinear beating solutions of system~\eqref{eqn:alphabeta} in the case $N<\NcrFD$. Top row: $A(t)$ (red), $\abs{\alpha(t)+i\beta(t)}$ (blue solid), $\alpha(t)$ (blue dashed), $\beta(t)$ (blue dash-dot).  Bottom row: absolute value of reconstructed field, $c_0 \psi_0 + c_1 \psi_1$, in~\eqref{eqn:ansatz}. Subfigure~(a) shows a solution near the stable fixed point, and~(b) shows a larger periodic orbit.}
\label{reconstructed_sub}
\end{figure}

Figure~\ref{reconstructed_super} shows four orbits in the supercritical regime $N>\NcrFD$.  The first two are self-trapped, and the last two are Josephson tunneling solutions.  Subfigure~\textbf{(a)} shows a solution close to the right fixed point, the field makes small oscillations about a steady asymmetric profile. Subfigures~\textbf{(b)} and~\textbf{(c)} show solution trajectories near the separatrix. Both these trajectories spend long times close to the hyperbolic fixed point $\alpha=\beta=0$, where the field is nearly symmetric, with short bursts to asymmetric states. In~\textbf{(b)}, inside the separatrix, all these bursts move toward one asymmetric state, but in~\textbf{(c)}, the field alternates between the two. Finally in subfigure~\textbf{(d)}, the solution makes larger swings between the two asymmetric states without pausing near the symmetric state. The earlier result of~\cite{MW} essentially shows the existence of the solutions of type~\textbf{(a)} in both the subcritical and supercritical regimes, while the present result allows for patterns similar to those seen in the rest of the figures. Direct numerical simulations of NLS/GP can be seen, for comparison, in~\cite{MW}.

\begin{figure}[htbp] %  figure placement: here, top, bottom, or page
   \centering
   \includegraphics[width=.45\textwidth]{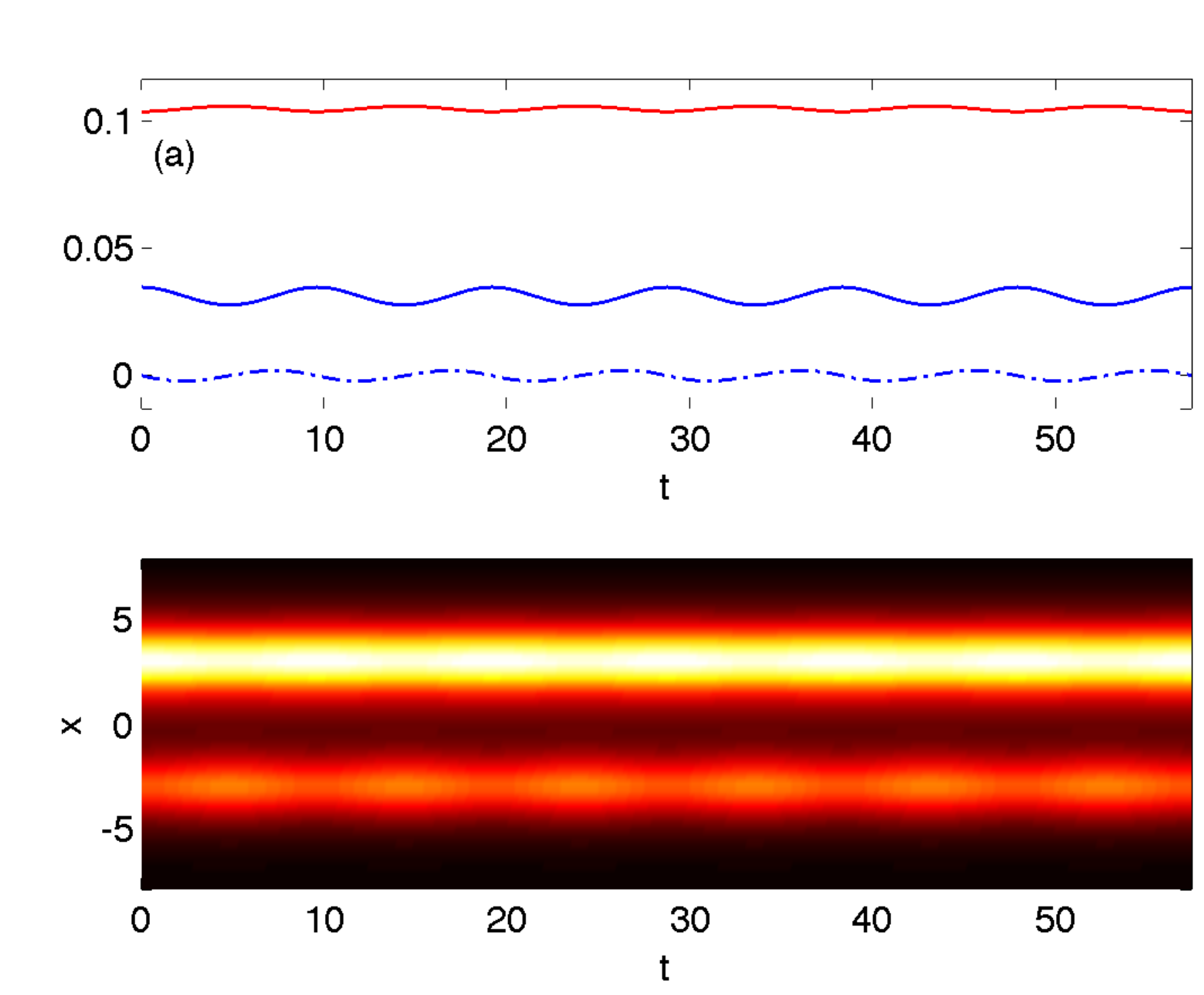}
   \includegraphics[width=.45\textwidth]{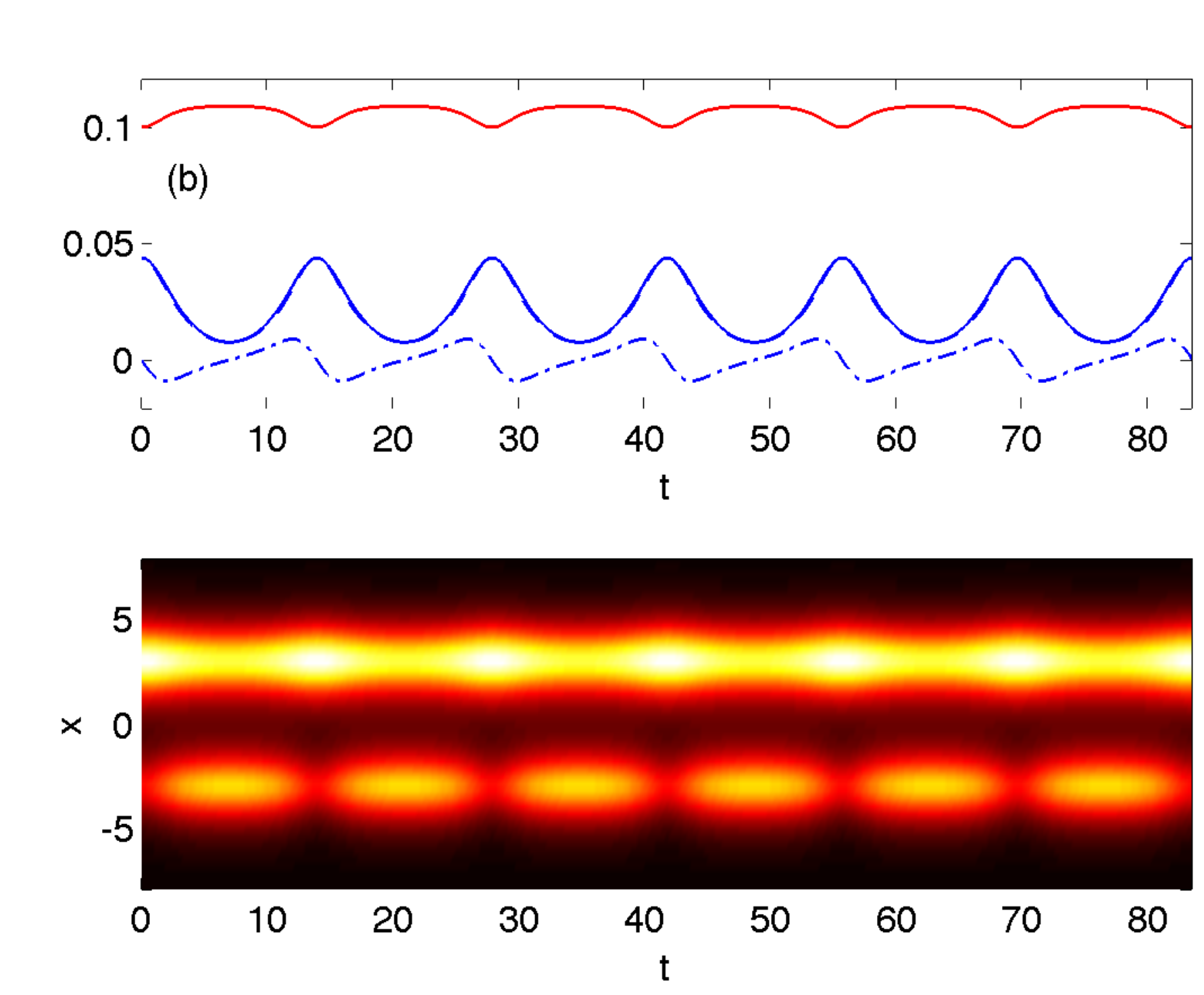}
   \includegraphics[width=.45\textwidth]{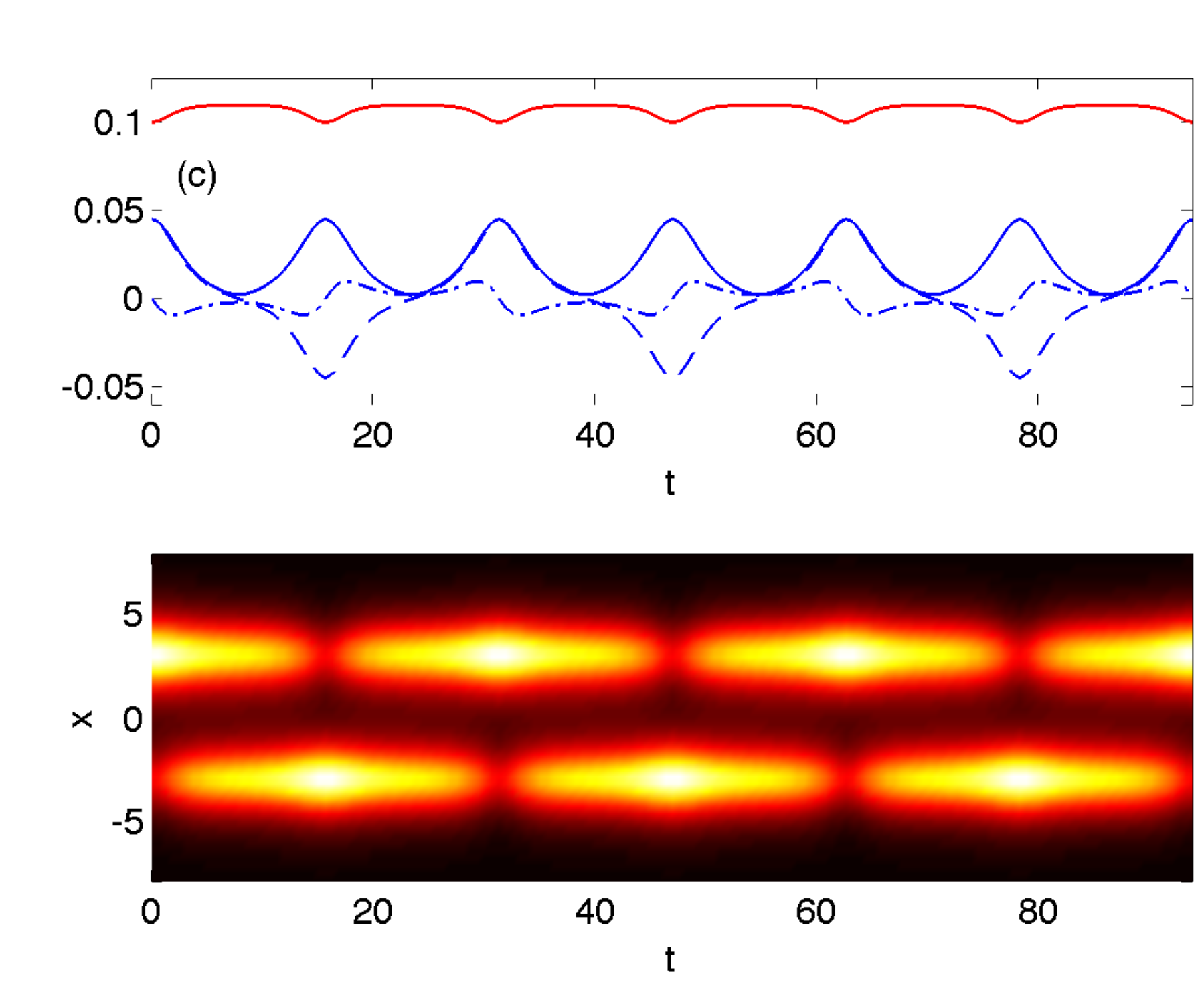}
   \includegraphics[width=.45\textwidth]{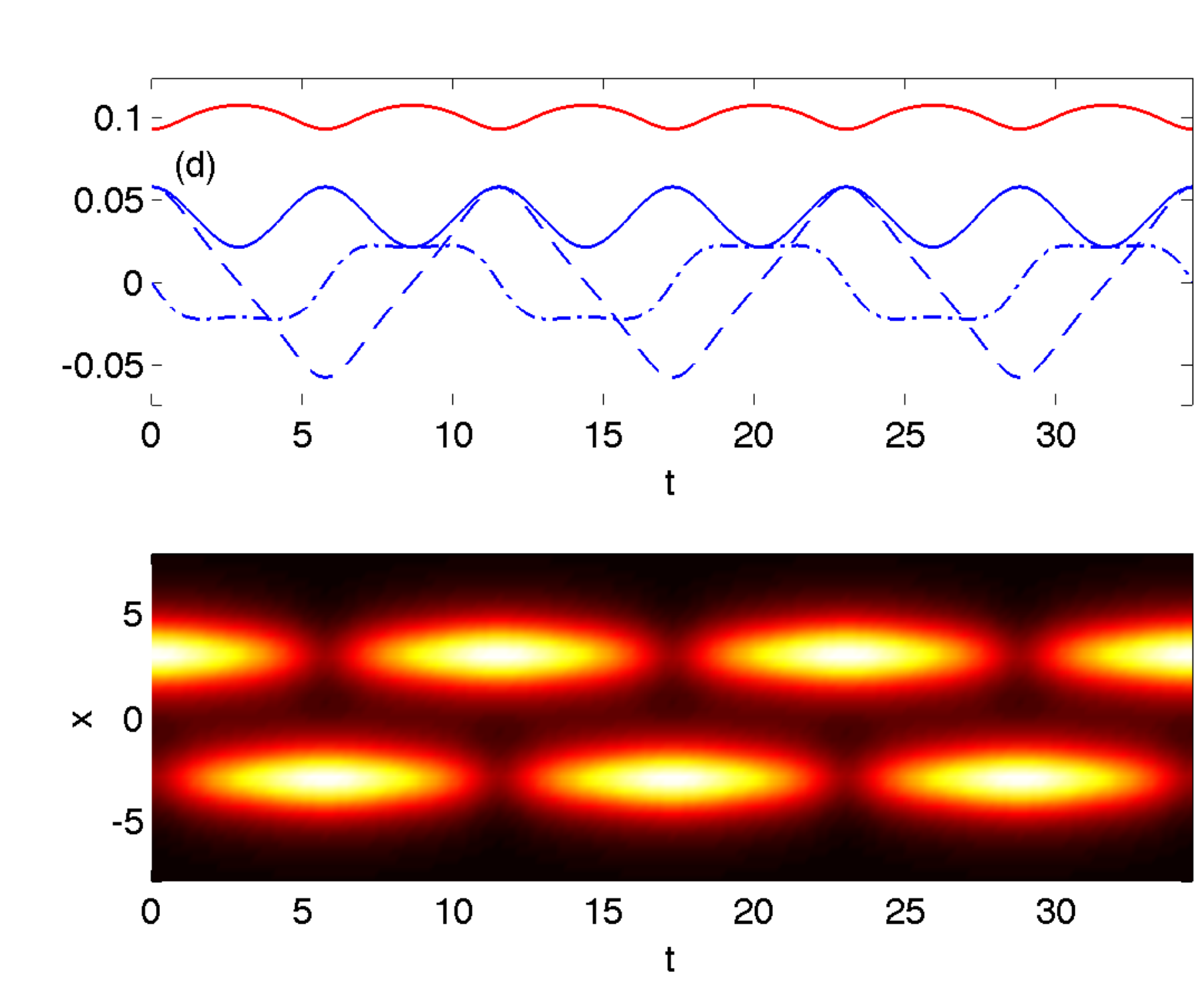}
   \caption{As in figure~\ref{reconstructed_sub} but for $N>\NcrFD$. \textbf{(a)} Near the fixed point $\alpha = \sqrt{-\sigma/2}$. \textbf{(b)} Just inside the separatrix. \textbf{(c)} Just outside the separatrix. \textbf{(d)} Far outside the separatrix. Note the difference in time scales between the various figures.}
\label{reconstructed_super}
\end{figure}
%\JLM{Include pictures comparing sinusoidal oscillations to snaking??}

We now give a non-technical statement of the main result generalizing Theorem 5.1 of~\cite{MW}, which applies only to periodic orbits lying sufficiently close to stable equilibria of~\eqref{eqn:dy}.  A precise statement appears in Sec.~\ref{sec:maintheorem}, Theorem~\ref{thm:main-eq}.

%\begin{thmstar}
%\label{thm:main}
\noindent {\bf Main Result:}
Let $0<|N-\NcrFD|$ be sufficiently small. For periodic solutions to the ODE system~\eqref{eqn:alphabeta} of sufficiently small amplitude for $N<\NcrFD$ or $N > \NcrFD$, as long as the periodic solutions are sufficiently bounded away from the separatrix  in the case $N>\NcrFD$, 
there are corresponding solutions to the NLS equation~\eqref{eqn:nlsdwp-gK} which shadow these orbits of the finite dimensional reduction on very long, but finite, time scales.
%\end{thmstar}
%\ONE{This loose statement has been changed from a Theorem to me more precise.}

The remainder of this paper is organized as follows:
Section~\ref{sec:maintheorem} contains a precise formulation of the main result, Theorem~\ref{thm:main-eq}. Section~\ref{s:exact} presents exact formulae for the periodic solutions  of~\eqref{eqn:alphabeta}, from which we derive period and amplitude estimates in Section~\ref{sec:rescaled}.  These bounds are used to show that the periodic orbits or the finite dimensional dynamical system satisfy the assumptions of Theorem~\ref{thm:main-eq}. The section concludes with a discussion of how to apply these estimates together with the analysis of~\cite{MW} to prove the main theorem. Section~\ref{sec:discussion} contains concluding remarks and a discussion of future directions.
The proof in this paper takes advantage of the exact solutions, which are convenient, but are not an essential feature for such a shadowing result to hold. Appendix~\ref{a:lie} details the use of Lie transforms to construct a normal form for system~\eqref{eqn:alphabeta}.  This is a possible first step in completing the proof in the absence of exact solutions.

\section{The main theorem}
\label{sec:maintheorem}

We begin by describing the regions of parameter space and phase space in which the results hold. Our results apply to the regime
 \begin{equation*}
 \abs{N-\NcrFD} \ll \NcrFD \ll1.
 \end{equation*}
Recall from~\cite{MW} that when $\ell \gg 1$, we have
\begin{equation*}
%\label{eqn:Ncr}
\NcrFD = \bigO{\Omega_1 (\ell) - \Omega_0 (\ell) } 
= \bigO{e^{-\kappa \ell}}
\; \text{for}\; \kappa > 0.
\end{equation*} 
Hence, we will define an asymptotic parameter $\sigma$ such that
\begin{equation*}
%\label{eqn:N-NcrSigma}
\NcrFD = O(\dd^\gamma),  \  \dd \ll 1  \  (\ell \gg 1)
\end{equation*}
with $7/9<\gamma<1$ as specified in Theorem~\ref{thm:main-eq} and take initial $N$ such that
\begin{equation*}
%\label{eqn:N-NcrSigma}
 \abs{N-\NcrFD} \leq \sigma.
\end{equation*}  
The $7/9$ here is not sharp, but is a remnant of the asymptotic techniques in~\cite{MW}.

The main result of this
paper is the following theorem.  Note that the key difference between Theorem 5.1 of~\cite{MW} and our main result, Theorem 2.1 (below), is that we now omit Assumption 3 of~\cite{MW}. We will use the statement and part of the proof of~\cite{MW}, Theorem $5.1$ in Section~\ref{sec:recap}.
%\TWO{re (G): Language changed as suggested to appropriately represent current result.}

\begin{thm}
\label{thm:main-eq}
There exists $\dd_0 > 0$ sufficiently small such that for all $0<\dd<\dd_0$ and $\frac{7}{9} < \gamma < 1$. Denote by
$$
{\tilde{\chi}}_{*}(t) = \left(\tilde{A}(t), \tilde{\alpha}(t) , \tilde{\beta}(t) \right),
$$
a periodic solution of~\eqref{eqn:dy}, satisfying the following two assumptions:
\renewcommand{\labelenumi}{(H\arabic{enumi})}
\renewcommand{\theenumi}{(H\arabic{enumi})}
\begin{enumerate}
\item\label{hypothesis1} Its period $\Tper(\dd) $ is not too large, as a function of $\sigma$, specifically
\begin{equation}
\Tper(\dd) \lesssim {\left(\abs{\NcrFD - N} \NcrFD\right)}^{-\frac{1}{2}}
 = \dd^{-\frac{1+\gamma}{2}} ,
\label{Tdelta}
 \end{equation}
 \item The  fundamental matrix solution $M(t)$ of the  dynamics linearized about $\chi_*(t)$ satisfies the norm bound:
\begin{equation}
0 < s,t < \Tper(\dd)\; \text{implies} \, \norm{M(t) M^{-1} (s)} \leq C\ {\left(\frac{N+\NcrFD}{|N-\NcrFD|}\right)}^{\frac{1}{2}}\ =\ C\ \dd^{\frac{\gamma-1}{2}}.
\label{Mbound}
\end{equation}
\end{enumerate}
Fix $\epsilon >0$ sufficiently small.  Then, there exist $\delta_0$, $\delta_1 > 0$ depending upon $\epsilon$, $\gamma$ such that the following holds.  Consider initial data of the form
\begin{equation*}
u_0(x) = e^{i \theta (0) } 
\left(\  \tilde{A} (0) \psi_0(x)\ +\  [\tilde{\alpha} (0) + i \tilde{\beta}(0)] \psi_1(x)\  \right) 
\end{equation*}
with $\theta(0)\in\RR$  chosen arbitrarily.  Then, there exists solution $u(x,t)$ of~\eqref{eqn:nlsdwp-gK} shadowing the periodic orbit of the~\eqref{eqn:dy}, 
 of the form
\begin{equation}
\label{full_u}
\begin{split}
u(x,t) & = e^{i \theta (t)} \left( (\tilde{A} (t) + \eta_A(t)) \psi_0(x)
  + [(\tilde{\alpha} (t) + \eta_\alpha(t)) + i ( \tilde{\beta} (t) +
  \eta_\beta(t))] \psi_1(x)  \right. \\
& \hspace{3cm} \left. +  \tilde{R}(x,t) + w(x,t)   \right).
\end{split}
\end{equation}
Here, $\theta(t) \in C^1([0,T_*(\dd)])$, and the remainder terms $\eta_A, \eta_\alpha, \eta_\beta, w$, and $\tilde{R}$ have the following properties:

\renewcommand{\labelenumi}{(B\arabic{enumi})}
\renewcommand{\theenumi}{(B\arabic{enumi})}
\begin{enumerate}
\item 
The function $\tilde{R}$ satisfies
\begin{equation*}
{\norm{\tilde{R}}}_{L^\infty_{t,x} } \lesssim \sigma^{1 + \delta_0}
\end{equation*}
and is the solution of
 \begin{equation}
\label{eqn:tildeR}
i \tilde{R}_t = (H - \Omega_0) \tilde{R} + (\tilde{A}^2 + 3 \tilde{\alpha}^2 + \tilde{\beta}^2) \tilde{R} + P_c F_b({\chi}_{*}) ,
\end{equation} 
where $F_b$ is displayed in Equation $(2.5)$ and Appendix $A$ of~\cite{MW}. 

\item 
We have $\eta (t)\equiv\left(\ \eta_A(t),\eta_\alpha(t),\eta_\beta(t)\right)\in C^1([0,T_*(\dd)])$  and  $w(t,x) \in L^\infty_t H^1_x \cap L^4_t L^\infty_x$ with the bounds
\begin{equation*}
\| {\eta} \|_{L^\infty_t[0,T_*(\dd)]} + \| w \|_{L^\infty_t([0,T_*(\dd)]; H^1_x)} + \| w \|_{L^4_t([0,T_*(\dd)]; L^\infty_x)} \lesssim \sigma^{\frac{1}{2}+\delta_1},
\end{equation*}
for all $t \in I =[0,T_*(\dd)]=[0,\Tper(\dd)\ \dd^{-\epsilon}]$.
\end{enumerate}
\end{thm}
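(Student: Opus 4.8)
The plan is to re-run the proof of Theorem~5.1 of~\cite{MW} essentially verbatim, isolating the places where its Assumption~3 is invoked and replacing that input by the two quantitative hypotheses (H1)--(H2). Assumption~3 of~\cite{MW} restricts attention to periodic orbits of~\eqref{eqn:dy} lying in a small neighborhood of a stable equilibrium; in the proof it is used only to control the period $\Tper$ of the orbit and the fundamental matrix $M(t)$ of the linearized flow --- in particular to bound $\norm{M(t)M^{-1}(s)}$ --- and~\eqref{Tdelta}--\eqref{Mbound} are precisely the bounds of this kind that the argument consumes (this is what the recapitulation in Section~\ref{sec:recap} is meant to make explicit). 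Concretely, one substitutes the ansatz~\eqref{full_u} into NLS/GP~\eqref{eqn:nlsdwp-gK} and projects onto $\operatorname{span}\{\psi_0,\psi_1\}$ and onto the range of $P_c$. The overall phase $\theta(t)$ is fixed by the analogue of~\eqref{varthetadot} written with the corrected amplitudes; $\tilde\chi_*=(\tilde A,\tilde\alpha,\tilde\beta)$ is the prescribed periodic solution of the truncated system~\eqref{eqn:dy}; and $\tilde R$ is \emph{defined} to be the solution of the linear inhomogeneous Schr\"odinger equation~\eqref{eqn:tildeR}, whose role is to absorb the resonant forcing $P_cF_b(\chi_*)$ driven by the known orbit, so that the genuine dispersive remainder $w$ sees only a higher-order source. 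What is left is a coupled system for the finite-dimensional error $\eta=(\eta_A,\eta_\alpha,\eta_\beta)$ and for $w$: the $\eta$-equation is a linear nonautonomous ODE whose homogeneous part is exactly the linearization of~\eqref{eqn:dy} about $\chi_*(t)$, with fundamental matrix $M(t)$, forced by terms at least quadratic in $(\eta,\tilde R,w)$ together with an inhomogeneity inherited from $\tilde R$ and from the truncation error of~\eqref{eqn:dy}; the $w$-equation is a Schr\"odinger equation with the uniformly bounded time-dependent potential $\tilde A^2+3\tilde\alpha^2+\tilde\beta^2$ and an analogous small source.

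I would then close the estimates first on a single period $[0,\Tper(\dd)]$. For $w$ one runs the Strichartz and local-smoothing estimates for $e^{-it(H-\Omega_0)}$ exactly as in~\cite{MW}, using that the potential along the orbit is uniformly small (the amplitude of $\chi_*$ is controlled through $\abs{N-\NcrFD}\le\dd$ and $\NcrFD=O(\dd^\gamma)$ by the exact solutions of Section~\ref{s:exact}), so that over one period $\norm{w}_{L^\infty_tH^1_x\cap L^4_tL^\infty_x}$ is bounded by the data plus $\Tper(\dd)$ times the source. For $\tilde R$ one uses that $P_c$ removes the bound-state directions, so that $e^{-it(H-\Omega_0)}P_c$ decays dispersively, and a Duhamel/contraction argument gives $\norm{\tilde R}_{L^\infty_{t,x}}\lesssim\dd^{1+\delta_0}$ uniformly in $t$. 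For $\eta$ one uses the variation-of-constants formula
\[
\eta(t)=M(t)\,\eta(0)+M(t)\int_0^t M^{-1}(s)\,\mathcal{G}(s)\,ds,
\]
with vanishing initial error $\eta(0)=0$ (permitted by the form of the initial data) together with~\eqref{Mbound}: on $0<s,t<\Tper(\dd)$ the propagator norm is $\lesssim\dd^{(\gamma-1)/2}$, so $\norm{\eta}_{L^\infty_t[0,\Tper(\dd)]}\lesssim\dd^{(\gamma-1)/2}\,\Tper(\dd)\,\sup\norm{\mathcal{G}}$, the quadratic contributions to $\mathcal{G}$ being reabsorbed by a continuity argument. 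By~\eqref{Tdelta} one has $\dd^{(\gamma-1)/2}\Tper(\dd)\lesssim\dd^{-1}$, so the error accumulated over one period is $\lesssim\dd^{-1}\cdot\dd^{1+\delta_0}$ plus higher order, which is small once $\delta_0>0$; this is the mechanism by which (H1) and (H2) together do the job of Assumption~3.

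Finally I would iterate over the $\dd^{-\epsilon}$ periods comprising $I=[0,T_*(\dd)]=[0,\Tper(\dd)\dd^{-\epsilon}]$, using periodicity of $\chi_*$ --- hence of all time-dependent coefficients --- and the fact that the full-period monodromy $M(\Tper(\dd))M^{-1}(0)$ has no unstable Floquet direction (all its multipliers equal $1$, a consequence of the conservation of $N$ and the Hamiltonian structure of~\eqref{eqn:alphabeta}), so that the propagator on all of $I$ degrades from the one-period bound~\eqref{Mbound} only algebraically, not exponentially, in the number of periods --- say by a further factor $\dd^{-\epsilon}$. Tracking exponents, the error in $(\eta,w)$ is then controlled by $\norm{M(t)M^{-1}(s)}\cdot T_*(\dd)\cdot\sup\norm{\mathcal{G}}\lesssim\dd^{(\gamma-1)/2-\epsilon}\cdot\dd^{-(1+\gamma)/2-\epsilon}\cdot(\mathrm{source})$, and one checks that for $\tfrac{7}{9}<\gamma<1$ and $\epsilon$ sufficiently small the source --- dominated by the $O(\dd^{1+\delta_0})$ contribution of $\tilde R$ and by the cubic cross terms --- is small enough to keep the total $\lesssim\dd^{1/2+\delta_1}$, as in~(B2), while $\tilde R$ retains its $\dd^{1+\delta_0}$ bound, as in~(B1); a concluding continuity argument then shows the solution persists on all of $I$. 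I expect the \emph{main obstacle} to be exactly this last step, the one Assumption~3 of~\cite{MW} was designed to bypass: away from stable equilibria $M(t)$ is no longer a bounded rotation, so one must verify that the interlocking inequalities --- the gain $\dd^{1+\delta_0}$ from solving~\eqref{eqn:tildeR} exactly, the loss $\dd^{-(1+\gamma)/2}$ from the period bound~\eqref{Tdelta}, the loss $\dd^{(\gamma-1)/2}$ from the propagator bound~\eqref{Mbound}, and the extra $\dd^{-\epsilon}$ from iterating --- still close with room to spare; the threshold $\gamma>\tfrac{7}{9}$ is the residue of the asymptotic bookkeeping already present in~\cite{MW}.
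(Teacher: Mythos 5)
Your proposal follows essentially the same route as the paper: the theorem is proved by re-running the machinery of Theorem 5.1 of~\cite{MW} --- the ansatz~\eqref{full_u} with $R=\tilde R+w$, the Duhamel/integral-equation formulation for $(\vec\eta,w,\theta)$, Strichartz-type bounds for $w$, the variation-of-constants bound for $\eta$ via $\norm{M(t)M^{-1}(s)}$, and a contraction/bootstrap in the space $X(I)$ --- with Assumption~3 of~\cite{MW} replaced precisely by the quantitative inputs (H1)--(H2), and multi-period growth controlled through the Floquet structure (all multipliers equal to one, hence only algebraic degradation), exactly as in Section~\ref{sec:recap} and Lemma~\ref{lem:eigen}. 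The exponent bookkeeping you sketch matches the paper's, so the approach and its execution are in line with the published argument.
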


%\RG{Two changes above: (1) I removed the reference to $\theta$ in the statement  "$F_b$ is displayed in Appendix..." since there is no mention of $\theta$ in the equation immediately above. (2) I removed the fact that $\theta \in C^1$ from item (2) and I put earlier, since it doesn't have to do with the estimates on the remainder terms. \textbf{Question:} Is it okay that I used word "remainder terms" or is there a better word?}

\begin{rem}
The full system we solve in Theorem~\ref{thm:main-eq} can be found in equations $(2.5)-(2.6)$ and $(2.11)-(2.17)$, with all error terms written out in Appendix A of~\cite{MW}.  Up to a phase shift, the phase term $\theta(t)$ in equation~\eqref{full_u} evolves under an equation similar to~\eqref{varthetadot} for $\vartheta(t)$ in equation~\eqref{eqn:c0c1}, though it differs by a small interaction term stemming from error terms in the full PDE expansion.  This leads to $\bigO{1}$ differences on the time scales considered. This is discussed in greater detail in appendix A and Section $3.1$ of~\cite{MW} and revisited in Section~\ref{sec:recap} below.  The $\delta_0$ and $\delta_1$ are sufficiently small, but non-zero constants chosen such that the bootstrapping arguments of~\cite{MW} hold.  In particular, they arise from the fact that the error terms must remain of lower order than the dominant dynamics over the time scale we study.
\end{rem}
%\TWO{RE (I): As suggested, clarified references to~\cite{MW} and re-ordered appearance of equations. In particular reference to the error term $G_\theta$ has been removed from remark since this term is not yet defined.}

The proof that there exist periodic orbits of the finite dimensional model that are shadowed by solutions to NLS is then provided by the following Lemma:
\begin{lem}
\label{lem:ampbds}
There exists $\dd_0>0$ and $\delta > 0$ (chosen as in~\cite{MW}), such that for $0<\dd<\dd_0$, the system~\eqref{eqn:alphabeta} has periodic solutions $(\tilde{\alpha}, \tilde{\beta}) (t)$ which through equation~\eqref{N} determine a periodic function $A(t)$ such that:
\begin{align*}
|\tilde{A}(t)|^2 &=O(\dd^\gamma),\quad |\tilde{\alpha}(t)|^2 +
|\tilde{\beta}(t)|^2\ =O(\dd),\qquad N>\NcrFD = \dd^\gamma, \\
|\tilde{A}(t)|^2 &=O(\dd^\gamma),\quad |\tilde{\alpha}(t)|^2 +
|\tilde{\beta}(t)|^2\ =O(\dd^{1+\delta}),\qquad N<\NcrFD = \sigma^\gamma.
%\label{orbit-size-tau}
\end{align*}

The periods of these orbits satisfy the  bound~\eqref{Tdelta} required in Theorem~\ref{thm:main-eq}.  Furthermore, the fundamental solution matrix $M(t)$ for the linearized system (see systems $(3.37)$, $(3.39)$ of~\cite{MW}) about these periodic orbits satisfies the bound~\eqref{Mbound} from Theorem~\ref{thm:main-eq}.
\end{lem}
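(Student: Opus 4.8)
The plan is to prove Lemma~\ref{lem:ampbds} constructively: for each small $\dd$ we exhibit the required orbit explicitly from the closed-form periodic solutions of~\eqref{eqn:alphabeta} derived in Section~\ref{s:exact}, and then read the amplitude, period, and fundamental-matrix bounds directly off those formulae in Section~\ref{sec:rescaled}. Recall that~\eqref{HDW}--\eqref{sigma} is a one-parameter family of Hamiltonians in which $\dd = \W_{10} - 2N$ changes sign at $N = \NcrFD = \W_{10}/2$, so that $\abs\dd \asymp \abs{N - \NcrFD}$. In the supercritical regime $N > \NcrFD$ (i.e.\ $\dd < 0$) the relevant orbits encircle one of the two nonzero centers of $\HDW$, which sit at distance $\asymp \abs\dd^{1/2}$ from the origin and at energy $\asymp -\dd^2$, the separatrix having energy $0$; I would select the orbit of energy $E$ equal to a fixed fraction of the center's energy, which both fixes the $\tilde\alpha$-amplitude at order $\abs\dd^{1/2}$ and keeps the elliptic modulus $k$ of the closed-form solution bounded away from $1$, i.e.\ keeps the orbit a definite distance from the separatrix. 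In the subcritical regime $N < \NcrFD$ ($\dd > 0$) I would select a small orbit about the origin with $\tilde\alpha$-amplitude $\asymp \dd^{(1+\delta)/2}$. In either case Section~\ref{s:exact} gives $\tilde\alpha(t),\tilde\beta(t)$ as Jacobi elliptic functions of a linearly rescaled time, and $\tilde A(t)^2 = N - \tilde\alpha(t)^2 - \tilde\beta(t)^2$ by~\eqref{N}.

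For the amplitude bounds, the closed form shows that $\tilde\beta$ has amplitude smaller than that of $\tilde\alpha$ by the factor $(\abs\dd/\W_{10})^{1/2} \asymp \dd^{(1-\gamma)/2}$, so $\abs{\tilde\alpha}^2 + \abs{\tilde\beta}^2$ is $O(\dd)$ (resp.\ $O(\dd^{1+\delta})$), as claimed; and since $N = \NcrFD + O(\dd) = \dd^\gamma(1+o(1))$ with $\abs{\tilde\alpha}^2 + \abs{\tilde\beta}^2 \ll \dd^\gamma$, equation~\eqref{N} yields $\tilde A(t)^2 = \dd^\gamma(1+o(1)) = O(\dd^\gamma)$. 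For the period, the closed form shows $\Tper(\dd)$ is a complete elliptic integral in $k$ divided by the harmonic frequency $\w_{\rm lin}$ of the linearization at the enclosed center, and $\w_{\rm lin}^2$ is the product of the Hessian eigenvalues of $\HDW$ there, namely $\asymp \abs\dd\,\W_{10} \asymp \abs{N - \NcrFD}\,\NcrFD$; because $k$ is bounded away from $1$ the elliptic integral is $O(1)$, so $\Tper(\dd) \asymp (\abs{N-\NcrFD}\,\NcrFD)^{-1/2} = \dd^{-(1+\gamma)/2}$, which is~\eqref{Tdelta}.

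For the fundamental-matrix bound~\eqref{Mbound} I would linearize~\eqref{eqn:dy} about the selected orbit and pass to the rescaled variable $\zeta = D\tilde\zeta$, $D = \operatorname{diag}(\dd^{1/2},\,\dd^{1-\gamma/2},\,\dd^{\gamma/2})$ (matching the amplitude scales of $(\tilde\alpha,\tilde\beta,\tilde A)$), together with rescaled time $\tau = \w_{\rm lin}t$. A direct computation — using the $N$-constraint cancellation that makes the $\partial_\alpha\dot\beta$ entry of order $\abs\dd$ rather than $\W_{10}$, and the inequality $\gamma > 7/9 > 2/3$ — shows that in these variables the variational equation has $O(1)$ coefficients and $O(1)$ period, and that to leading order it is a harmonic oscillator in the rescaled $(\zeta_\alpha,\zeta_\beta)$-plane decoupled from a trivial $\zeta_A$-direction tangent to the $N$-level set, the couplings to $\zeta_A$ being of strictly lower order. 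Since the orbit stays a fixed distance from the separatrix, the oscillator coefficient keeps one sign and stays bounded, so its monodromy, hence the rescaled fundamental matrix $\tilde M(\tau)\,\tilde M^{-1}(\tau')$ over any subinterval of a period, is elliptic and of norm $O(1)$ uniformly in $\dd$. Undoing the rescaling, $M(t)M^{-1}(s) = D\,\tilde M(\tau(t))\,\tilde M^{-1}(\tau(s))\,D^{-1}$; because the leading block of $\tilde M$ is a plane rotation, conjugating it by $\operatorname{diag}(\dd^{1/2},\dd^{1-\gamma/2})$ costs only the single scale ratio $\dd^{1/2}/\dd^{1-\gamma/2} = \dd^{(\gamma-1)/2}$, and all remaining small-coupling contributions are bounded by the same power; hence $\norm{M(t)M^{-1}(s)} \lesssim \dd^{(\gamma-1)/2} \asymp ((N+\NcrFD)/\abs{N-\NcrFD})^{1/2}$, which is~\eqref{Mbound}.

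The main obstacle is this last step: obtaining~\eqref{Mbound} with a constant genuinely uniform in $\dd$. The crude estimate $\norm{D}\,\norm{D^{-1}} = \dd^{\gamma-1}$ is too weak by a square root, so one must exploit the structure of the rescaled variational flow — that its dominant part is a rotation rather than an arbitrary $O(1)$ matrix, and that the weak couplings to the $A$-direction do not accumulate over the long period $\Tper \sim \dd^{-(1+\gamma)/2}$. Keeping the three length scales straight, verifying that the rescaled oscillator never enters a parametric-resonance or hyperbolic regime (which is precisely why the supercritical orbits must be held a fixed distance from the separatrix and the subcritical ones confined to the near-harmonic regime where the quartic terms $\tilde\alpha^4$, $\tilde\alpha^2\tilde\beta^2$ in $\HDW$ are negligible), and finally checking that these orbits verify the hypotheses of Theorem~\ref{thm:main-eq}, is the bulk of the argument in Section~\ref{sec:rescaled}.
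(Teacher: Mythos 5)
Your overall strategy (use the closed-form elliptic-function orbits of Section~\ref{s:exact}, read off amplitudes, bound the period by keeping the modulus $k$ away from $1$, then control the variational flow using the scale separation between $\tilde\alpha$, $\tilde\beta$, $\tilde A$) is the paper's strategy, and your amplitude and period arguments are essentially the ones in Section~\ref{sec:peramp}. Two things diverge. First, your choice of supercritical orbits: you take orbits encircling \emph{one} center, inside the separatrix, whereas the paper verifies the hypotheses for the exterior (Josephson tunneling) orbits with $q_0\ge\sqrt{3\zeta/2}$, bounded away from the separatrix from outside. The literal statement of Lemma~\ref{lem:ampbds} is an existence statement and your interior family does satisfy the stated amplitude and period scalings, but the whole point of the lemma is to furnish the orbits invoked in the Main Result — the tunneling/large beating orbits not confined near the stable equilibria — so a proof that only produces self-trapped orbits around a single center misses the case the paper is actually extending beyond \cite{MW}; the exterior orbits require the same estimates (this is Section~\ref{sec:peramp}) and should be included.

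Second, and more seriously, your verification of~\eqref{Mbound} contains incorrect structural claims. With your scaling $D=\operatorname{diag}(\dd^{1/2},\dd^{1-\gamma/2},\dd^{\gamma/2})$ and $\tau=\omega_{\rm lin}t$, the rescaled variational matrix is \emph{not} $O(1)$: the entry coupling $\delta A$ into $\delta\beta$ is $-4\alpha_*A_*\sim\dd^{(1+\gamma)/2}$ (see~\eqref{eqn:sys-lin-alpha}), and after rescaling it becomes of size $\dd^{\gamma-1}\to\infty$; the weak couplings go \emph{from} the $(\alpha,\beta)$-plane \emph{into} the $A$-direction, not the other way around, so "the couplings to $\zeta_A$ being of strictly lower order" is backwards and the naive Gronwall/monodromy argument you sketch does not yield an $O(1)$ rescaled fundamental matrix. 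Moreover the monodromy cannot be elliptic: for the variational equation about a periodic orbit of this autonomous system all three Floquet multipliers equal $1$ with a nontrivial Jordan block (the paper's Lemma~\ref{lem:eigen}, via Liouville's formula and the symmetry $\int_0^T\alpha\beta\,ds=0$), and the resulting secular, linearly-growing solutions — coming from $\partial\omega/\partial q_0$, $\partial k/\partial\zeta$, $K'(k)$, etc. — are exactly the dominant terms that must be estimated; the paper does this by building $\tilde M(t)$ explicitly from $\partial_t\chi$, $\partial_{q_0}\chi$, $\partial_\zeta\chi$ of the exact solution, renormalizing at $t=0$, and bounding the coefficients in~\eqref{eqn:percoeffs}--\eqref{eqn:tcoeffs} using $\partial k/\partial q_0=\bigO{\dd^{(\gamma-1)/2}}$, $\partial k/\partial\zeta=\bigO{\dd^{\gamma-1}}$, which gives $\norm{M(t)}=\bigO{\dd^{(\gamma-1)/2}}$ over one period (Lemma~\ref{lem:fundsolbd}). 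Your final exponent $\dd^{(\gamma-1)/2}$ is the right answer, and a corrected version of your argument (treating $\zeta_A$ as a nearly-conserved slow variable and tracking the secular terms it drives) could likely recover it, but as written the step you yourself identify as the main obstacle is not closed, and the claims offered to close it ($O(1)$ coefficients, elliptic $O(1)$ monodromy, rotation structure) are false for this system.
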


We next embark on the proof of this theorem, beginning with constructing periodic orbits and bounding their periods.

\section{Explicit Construction of Periodic Orbits}
\label{s:exact}

To solve the ODE system~\eqref{eqn:alphabeta}, we first obtain a simpler form of the equations by the following rescaling:
\begin{equation}
\label{eqn:rescaling}
\alpha = \sqrt{\frac{\W_{10}}2 } q (\tau), \, \beta =  \sqrt{\frac{\W_{10}}2 } p (\tau), \,
 \tau =  \W_{10} t.
\end{equation}
In terms of the new variables $p,q$, the Hamiltonian (up to the addition of a constant) is given by
\begin{align}
%\label{eqn:H}
H_{\rm DW} (q,p) &= \frac{1}{2}(1+ q^2) p^2 + \frac{1}{2}{\left(q^2 - \frac{\zeta}{2} \right)}^2 
\equiv T(q,p) + V(q), \notag\\  
\zeta &= (2N - \W_{10})/\W_{10} = \bigO{\dd^{1 - \gamma}}.
\label{eqn:zetadef}
\end{align}
The potential energy $V(q)$ has double-well structure in the supercritical case $\zeta>0$ ($N > \NcrFD$).  Note that the kinetic energy depends both on the position and momentum.
When $\zeta>0$, the system has three fixed points, a saddle at $q=0$, and centers at $q= \pm \sqrt{\tfrac{\zeta}{2}}$.

From $\dot{q} = \frac{\partial H}{\partial p} = (1+ q^2)p$, we obtain $p = {\dot q}/{(1+ q^2)}$.
%\label{eqn:p}
%\end{equation*}
By conservation of Hamiltonian, a solution with initial condition $(q(0),p(0)) = (q_0,0)$ satisfies
\begin{equation*}
 H(\tau) = H(0) = \frac{1}{2}{\left(q_0^2 - \frac{\zeta}{2} \right)}^2 
= \frac{1}{2}(1+ q^2) p^2 + \frac{1}{2}{\left(q^2 - \frac{\zeta}{2} \right)}^2 \\
=\frac{1}{2}\frac{\dot{q}^2}{1+  q^2} + \frac{1}{2}{\left(q^2 - \frac{\zeta}{2}\right)}^2.
\end{equation*}
Solving for $\dot{q}^2$ yields
\begin{equation}
\label{eqn:qdot1}
\dot{q}^2   = 
\left(1+ q^2\right)\left({\left(q_0^2 - \frac{\zeta}{2}\right)}^2 - {\left(q^2-\frac{\zeta}{2}\right)}^2 \right) \\
 = \left(1+ q^2\right) \left(q_0^2 - q^2 \right) \left(q^2 + q_0^2 -  \zeta \right).
\end{equation}
We now proceed with the exact integration of~\eqref{eqn:qdot1}.

\subsubsection*{Josephson Tunneling and Nonlinear Beating Solutions}

Let us first analyze orbits exterior to the separatrix for $N > \NcrFD$.  By~\eqref{eqn:zetadef}, this implies that $q_0-\sqrt{\zeta} > 0$.  
Define $a$ by $$a^2 \equiv q_0^2-\zeta.$$ 
Equation~\eqref{eqn:qdot1} may be rewritten as
\begin{equation*}
\dot{q}^2 =
\left(1+  q^2\right)\left(q_0^2-q^2\right)\left(a^2 + q^2\right).
%\label{eqn:qdot2*}
\end{equation*}
Separation of variables and integration yields
\begin{equation*}
%\label{eqn:tint}
\tau = - \int_{q_0}^{q} 
\frac{d{\tilde{q}}}
       {\sqrt{\left(1+  {\tilde{q}}^2\right)\left(a^2 + {\tilde{q}}^2\right)\left(q_0^2-{\tilde{q}}^2\right)}}
       =\frac{1}{\sqrt{(1-a^2)(1+q_0^2)} } 
\int_u^B \frac{dQ}{\sqrt{(A^2 + Q^2)(B^2-Q^2)}},
\end{equation*}
where we have defined
$$
{\tilde{q}} = \frac{Q}{\sqrt{1-Q^2}}, \,
A^2 = \frac{a^2}{1-a^2}, \, 
B^2 = \frac{q_0^2}{1+q_0^2}, 
\text{and } 
u^2 = \frac{q^2}{1+q^2}.
$$
A further simplification is reached via the trigonometric substitution $Q=B \sin{\theta}$ (see also~\cite[integral 3.152.4]{Gradshteyn:2007})
$$
\int_u^B \frac{dQ}{\sqrt{(A^2 + Q^2)(B^2-Q^2)}} = \frac{1}{\sqrt{A^2+B^2}} F(\delta,k),
$$
where $F(\delta, k)$ is the \emph{incomplete elliptic integral of the first kind} given by
\begin{equation*}
F(\mu,k) = \int_0^\mu \frac{dt}{\sqrt{1-k^2 \sin^2 t}}, \ 0 \le k <1,
\end{equation*} 
and
\begin{equation}
 k^2 = \frac{B^2}{A^2 + B^2} = \frac{\left(1+ \zeta-q_0^2\right)q_0^2}{2 q_0^2-\zeta}\, \text{and }\, \delta = \cos^{-1}\frac{u}{B} .
\label{k-def}
\end{equation}
Some calculation yields the formula 
$$
\omega \tau = F(\delta, k), \text{where } \omega = \sqrt{q_0^2- \frac{\zeta}2}.
$$

To obtain $q$ as a function of $\tau$ requires an identity that inverts the equation $\theta = F(\delta, k)$.
This inverse defines the Jacobi elliptic function:
$$
\sn(\theta,k) = \sin{\delta}.
$$
An introduction to these quantities in the language of modern dynamical systems is~\cite{Meyer:2001}, while a comprehensive handbook is by Byrd and Friedman~\cite{Byrd:1954}.
The functions $\cn$ and $\dn$ are defined in a similar manner.
From here, there remains only the algebra to invert all the above changes of variables, which makes use of the identity $\sn^2(\delta,k) + \cn^2(\delta,k) = 1$, and eventually arrives at
$$
q(\tau) = \frac{q_0 \cn{(\omega \tau,k)}}{\sqrt{1+q_0^2 \sn^2{(\omega \tau,k)}}}.
% Note this definition of omega is about three lines above
%, \text{with }
%\omega = \sqrt{q_0^2 - \frac{\zeta}{2}}.
$$
The orbit $q(t)$ has period given in terms of $K(k)$, the \emph{complete elliptic integral of the first kind}
\begin{equation}
T = \frac{4 F\left(\frac{\pi}{2},k\right)}{\omega}= \frac{4 K(k)}{\omega} = \frac{4}{\omega} K\left(q_0 \sqrt{\frac{1+ \zeta-q_0^2}{2q_0^2-\zeta}}\right).
\label{eqn:Tperiod}
\end{equation}
This is a decreasing function of $q_0$ for $q_0>\sqrt{\zeta}$ and fixed $\zeta$.  
For the case $N < \NcrFD$, we note that $\zeta <0$, hence the analysis follows in an identical fashion.

\subsubsection*{Self-trapping solutions}
The periodic orbits encircling only one fixed point in the supercritical case $\zeta>0$ may be found in the same manner. These orbits are of the form
\begin{gather*}
q(t) =\pm  \frac{q_0\dn{(\omega t, k)}}{\sqrt{1+q_0^2 k^2 \sn{(\omega t, k)}^2}},\\ 
\text{with }\sqrt{\zeta/2}<q_0<\sqrt{\zeta},  \;
\omega^2  = \frac{\left(1+\zeta-q_0^2\right)q_0^2}{2}, \, \text{and } \;
k^2 = \frac{2q_0^2-\zeta}{q_0^2 \left(1+\zeta-q_0^2\right)}.
\end{gather*}
Then, once again we have 
\begin{equation*}
T = \frac{2 F\left(\frac{\pi}{2},k\right)}{\omega}= \frac{2 K(k)}{\omega} = \frac{2}{\omega} K\left(q_0 \sqrt{\frac{2q_0^2-\zeta}{q_0^2 \left(1+\zeta-q_0^2\right)}}\right).
%\label{eqn:Tperiod1}
\end{equation*}

\section{Bounds on the monodromy matrix and the period for Josephson Tunneling orbits}
\label{sec:rescaled}

\subsection{Verification of Hypothesis~\ref{hypothesis1} from Theorem~\ref{thm:main-eq} and amplitude bounds from Lemma~\ref{lem:ampbds}}
\label{sec:peramp}

We now prove the period estimate~\eqref{Tdelta} from the expression~\eqref{eqn:Tperiod}.  Each orbit of~\eqref{eqn:dy} has an initial condition of the form 
$$
\tilde{\chi}(0)=(\alpha_0, 0, A_0),
$$
where $\alpha_0^2 + A_0^2 > \NcrFD = \dd^\gamma$.
The exact solution $\alpha (t)$ and $\beta(t)$ to system~\eqref{eqn:dy} is given by equations~\eqref{eqn:rescaling}.  Then, $A(t)$ is computed from~\eqref{N}.  
The values of $q_0$ and $\zeta$ in terms of $\dd$ are read off these expressions.

Using the asymptotic parameter $\sigma$ as set in~\eqref{sigma}, we have that the parameters $\Omega_{10} = \bigO{\sigma^\gamma}$ and $\zeta = \bigO{\sigma^{1-\gamma}}$.
%\RG{There are two hypotheses (well, one hypothesis and one bound) labelled (1) in the theorem! I see two choices: relabeling them as (H1), (H2), and (B1), (B2) or else referring to the numbered inequalities themselves, and replacing \emph{enumerate} with \emph{itemize}. I prefer equation numbers because they're easier to hunt for when you read the paper.}
With this choice of $\Omega_{10}(\sigma)$ and $\zeta(\sigma)$, Hypothesis~\ref{hypothesis1} of Theorem~\ref{thm:main-eq} holds for initial values, $q_0$, for which
\begin{equation}
\label{eqn:orbitTdef}
T = \frac{4 K\left(k(\zeta,q_0)\right)}{\omega \Omega_{10}} = \bigO{\sigma^{- \frac{1+\gamma}{2}} },
\end{equation}
where $k(\zeta,q_0)$ is given by~\eqref{k-def}.

To satisfy~\eqref{eqn:orbitTdef}, we take 
\begin{equation}
C \geq q_0 \geq  \sqrt{\frac{3\zeta}{2}} \sim \sqrt{\frac{3}{2}}
\bigO{\sigma^{\frac{1-\gamma}{2}}}, 
\label{eqn:q0bds}
\end{equation}
for some $C(\sigma)$ bounded from above but that we do not attempt to optimize here.  Then, $\omega=\sqrt{q_0^2-\zeta/2}\approx \sigma^{\frac{1-\gamma}2}$.
Therefore,  $\omega\Omega_{10}\approx \sigma^{\frac{1+\gamma}2}$. 
To prove the bound  
\eqref{eqn:orbitTdef} it suffices to show $K\left(k(\zeta,q_0)\right)=\bigO{1}$.

%\TWO{RE (J): Inequality typo fixed (also noticed by Ref. 1) and discussion of the elliptic integral formulation extended}
%\TWO{RE (L): Connection between $k^2-1$ and distance to separatrix has been clarified.}
%\TWO{RE (M): Reference to expansion of $k$ points to (3.4) where $k$ defined in terms of $q_0$.}
The complete elliptic integral has the asymptotic behavior $K(k) \to \infty$ logarithmically as $k^2 \to 1$, where $k^2-1$ in effect encodes the distance of the exact solution from the separatrix. Hence, we must choose $q_0$ sufficiently far from the separatrix such that $K(k)$  is uniformly bounded with a computable constant dependent upon $k_0$ when $|k| < k_0 < 1$.  Indeed, we observe via a Taylor expansion of $k$ in the parameter $\zeta/2q_0^2$ in equation~\eqref{k-def} that
$$
k \leq \frac{7}{8} \, \text{for} \ q_0 \geq \sqrt{\frac{3}{2}} \sigma^{\frac{1-\gamma}{2}}.
$$ 
More specifically, given the scaling $\sqrt{\zeta} \ll q_0 \ll \sigma$, equation~\eqref{k-def} yields the approximation
%Indeed, let us write
%\begin{equation*}
%k = \sqrt{\frac{1 + \zeta - q_0^2}{2\left( 1 - \frac{\zeta}{2q_0^2} \right) } }.
%\end{equation*}
%Since $\zeta$ and $q_0$ are both small in terms of $\sigma$, the leading order contribution is of size
\begin{equation*}
k \approx \frac{1}{2} +  \frac{\zeta}{4q_0^2} ,
\end{equation*}
which for $q_0$ as in~\eqref{eqn:q0bds} and  $\sigma$ sufficiently small implies 
\begin{equation*}
k \leq \frac{1}{2} +  \sum_{j=1}^\infty \frac{\zeta}{4q_0^2}  + \mathcal{O} (\sigma^{\frac{1-\gamma}{2}}) \leq \frac{3}{4} + \mathcal{O} (\sigma^{\frac{1-\gamma}{2}})  < \frac{7}{8}.
\end{equation*}
%\TWO{RE (K): Expanded exposition regarding Taylor series for $k$ as requested by Referee.}

It follows that
$$T < \frac{4 K(7/8)}{\omega \Omega_{10}} = \bigO {\sigma^{- \frac{1+\gamma}{2}}}  \sim T_{\rm shadow}, \ \text{using} \ \omega = \sqrt{q_0^2 - \zeta/2}, \ \Omega_{10} = 2 \NcrFD.$$
At this stage we note that as $K$ is a decreasing function as $k \to 0$, hence orbits orbits sufficiently close to the separatrix violate the period bound~\eqref{Tdelta}.  Also, the choice of $7/8$ is not  sharp, but suffices to bound the term  $3(1+\sigma)/4$ arising as the leading order of the Taylor expansion of $k$ using~\eqref{eqn:q0bds}.

To verify the amplitude bounds from Lemma~\ref{lem:ampbds} for $q_0$ satisfying estimate~\eqref{eqn:q0bds}, we directly evaluate~\eqref{eqn:rescaling} and observe
\begin{equation}
\label{eqn:ampbds1}
|\alpha|, |\beta| \lesssim \sigma^{\frac{1}{2}}
\end{equation}
which, using the  conservation equation~\eqref{N}, where $N - \alpha^2 - \beta^2 \sim \sigma^\gamma$ gives 
\begin{equation}
\label{eqn:ampbds2}
|A| \lesssim \sigma^{\frac{\gamma}2}.
\end{equation}

\subsection{Verification of hypothesis $H2$ from Theorem~\ref{thm:main-eq}}
\label{sec:fundsol}

Equations (3.36)--(3.39) of~\cite{MW} discuss the linearization of system~\eqref{syst-rot} about an arbitrary periodic solution, 
$$\chi_*=(\alpha_*,\beta_*,A_*,\theta_*)$$ 
with period $T_*$.   Separating the linearized evolution component into the coupled $(\alpha,\beta,A)$ system~\eqref{eqn:dy} and the independent $\theta$ evolution equation yields
\begin{subequations}
\label{eqn:sys-lin}
\begin{align}
\label{eqn:sys-lin-alpha}
\frac{d}{dt} \begin{bmatrix}
\delta \alpha \\
\delta \beta \\
\delta A 
\end{bmatrix}
&=  
\begin{bmatrix}
4 \alpha_* \beta_* & 2(\NcrFD + \alpha_*^2) & 0  \\
-(\Omega_{10} +6\alpha_*^2-2A_*^2)  & 0 & -4 \alpha_*A_*  \\
- 2 A_* \beta_* & -2 \alpha_* A_* & -2 \alpha_* \beta_* \\
\end{bmatrix}
\begin{bmatrix}
\delta \alpha \\
\delta \beta \\
\delta A
\end{bmatrix}
%\\
%& \hspace{1.5cm} 
\equiv  B (t) \begin{bmatrix}
\delta \alpha \\
\delta \beta \\
\delta A
\end{bmatrix};  
%\notag 
\\
\label{eqn:sys-lin-theta}
\frac{d}{dt} (\delta \theta) & = 
\begin{bmatrix}
6\alpha_* & 2\beta_* & 2A_* 
\end{bmatrix} 
\cdot 
\begin{bmatrix}
\delta \alpha \\
\delta \beta \\
\delta A 
\end{bmatrix}.
\end{align}
\end{subequations}
Floquet's theorem says that there exists a matrix-valued $T_*$-periodic function $P(t)$ and a constant matrix $B_*$, such that the fundamental solution matrix of subsystem~\eqref{eqn:sys-lin-alpha} with initial condition $M(t)=0$ is
\begin{equation*}
%\label{eqn:monodromy}
M(t) = P(t) e^{B_* t} \; \text{with } \ P(0) = P(T_*) = I.
\end{equation*} 

In~\cite{MW}, Section $3.3$, the authors construced $M_* = e^{B_* t}$ at a stable equilibrium point, then restricted their analysis to nearby periodic orbits, for which they could prove bounds on $M(t)$ perturbatively.  Here, we explicitly construct the matrix $M$ at any periodic orbit using the exact solution derived in Section~\ref{s:exact}.   The linearly independent column vectors generating the matrix $M$ are found by differentiating equation~\eqref{eqn:dy} with respect to the canonical system parameters $\zeta$, $q_0$ and $t$.  These represent translation in the mutually orthogonal directions through energy space (translation perpendicular to the phase plane through energy space), within the energy plane onto a nearby orbit (translation in the normal direction to an orbit) and along a heteroclinic orbit respectively (translation in the tangential direction to an orbit).  

To proceed, we evaluate the periodic solution ${\chi}(t)$ and then its derivatives $\partial_t \chi$, $\partial_{q_0} \chi$, and $\partial_\zeta\chi$ using the exact solution.  Each of these vectors  solves equation~\eqref{eqn:sys-lin-theta}. A matrix whose columns  are given by these three vectors, or, in fact by any three independent linear combinations of these solutions, defines a fundamental solution matrix. Of these three vectors, the first is $T$-periodic in time, while the second and third exhibit linearly-growing oscillations.
 
We first compute
$$
\partial_{t}{\chi}
= \begin{bmatrix}
\sigma^{\frac{1}{2} } \dot{q} (\sigma^{\gamma} t) \\
\sigma^{\frac{1}{2} } \dot{p} (\sigma^{\gamma} t) \\
- (\alpha \dot \alpha +  \beta \dot \beta) {\left(N - \alpha^2 - \beta^2\right)}^{-\frac{1}{2}}
\end{bmatrix}.
$$
At $t=0$, only the second component is nonzero, so we can define the renormalized vector
$$
{\chi}_t^{\ren}(0) = 
 \begin{bmatrix}
0 \\
1 \\
0
\end{bmatrix}.
$$

The vectors $\partial_{q_0}{\chi} $ and $\partial_{\zeta}{\chi}$ can be computed similarly.  As the expressions for them in general is rather long, we display here only their values at $t=0$:
$$
\left.\partial_{q_0 }{\chi} \right|_{t=0} = 
 \begin{bmatrix}
\frac{1}{\sqrt{2}} \W_{10}^{\frac{1}{2}} \\
0  \\
 - q_0 \W_{10} A^{-1} (0)
\end{bmatrix} 
\text{and  }
\left.
\partial_{\zeta}{\chi} \right|_{t=0} = 
\begin{bmatrix}
0 \\
0 \\
\frac{1}{2} (1-q_0^2)  A^{-1} (0)
\end{bmatrix}.
$$
Using from Section~\ref{sec:peramp} that $A(0) = \bigO {\sigma^{\frac{\gamma}{2}} }$, these may be renormalized to
$$
\chi_{q_0}^{\ren} (0) =
 \begin{bmatrix}
1 \\
0  \\
- C \ \dd^{\frac{1-\gamma}{2}}
\end{bmatrix} 
\text{and  }
\chi_{\zeta}^{\ren} (0) =
\begin{bmatrix}
0  \\
0  \\
1
\end{bmatrix},
$$
where $C$ is a positive $\bigO{1}$ constant.  We can then define the fundamental solution matrix
\begin{equation}
\label{eqn:tilMtdef}
\tilde{M}(t) = 
\left[  
\chi_{q_0}^{\ren} (t) \ 
\chi_{t}^{\ren} (t) \
\chi_{\zeta}^{\ren} (t)
\right],
\end{equation}
and the matrix 
\begin{equation}
\label{eqn:Mtdef}
M (t) = \tilde{M}(t) {\tilde{M}}^{-1} (0).
\end{equation}
Note, since $M(0)=I$, $M(T)$ is the  monodromy matrix for the Floquet system~\eqref{eqn:sys-lin-alpha}.

To prove~\eqref{Mbound} for the fundamental solution operator $M(t)$, we prove first that the growth over one full period is bounded.  Then, we prove that the variation within a single period is bounded.

\begin{lem}
\label{lem:eigen}
The eigenvalues of the monodromy matrix $M(T)$ are $\lambda_1 = \lambda_2 = \lambda_3=1$. As a consequence the eigenvalues of $B_*$ are all zero. In addition, $M(T)$ has at least two linearly independent eigenvectors, so that $\norm{e^{B_*T}}< 1+ c T$.
\end{lem}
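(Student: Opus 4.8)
The plan is to read off the complete Jordan structure of the monodromy matrix $M(T)$ directly from the explicit fundamental solution matrix $\tilde M(t)$ built in~\eqref{eqn:tilMtdef}--\eqref{eqn:Mtdef}, using the fact that its three columns are the derivatives of the periodic orbit $\chi_*(t;q_0,\zeta)$ (computed at fixed $\W_{10}$) with respect to its three natural parameters: the phase $t$, the amplitude $q_0$, and the energy parameter $\zeta$. The first preliminary step is to note that each renormalized column is a \emph{$t$-independent} scalar multiple of the corresponding parameter-derivative of the orbit, $\chi^{\ren}_t(t)=c_1^{-1}\partial_t\chi_*(t)$, $\chi^{\ren}_{q_0}(t)=c_2^{-1}\partial_{q_0}\chi_*(t)$, $\chi^{\ren}_\zeta(t)=c_3^{-1}\partial_\zeta\chi_*(t)$, with $c_1,c_2,c_3\neq0$ fixed by the normalizations at $t=0$. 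For this to make sense one needs $\chi_*(t;q_0,\zeta)$ — and its period — to depend smoothly on $(t,q_0,\zeta)$ on the relevant range; this uses $A_*(t)=\sqrt{N-\alpha_*^2-\beta_*^2}>0$ along the orbit, which follows from the amplitude bounds $\alpha_*^2+\beta_*^2\lesssim\dd\ll\dd^\gamma\sim N$ of \secref{sec:peramp}, together with the away-from-separatrix estimate $k<7/8$ there (so that the period~\eqref{eqn:Tperiod} is finite and smooth in the parameters).

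Next I would differentiate the periodicity identity $\chi_*(t+T(q_0,\zeta);q_0,\zeta)=\chi_*(t;q_0,\zeta)$, where $T(q_0,\zeta)$ is the period~\eqref{eqn:Tperiod}, separately in $t$, $q_0$ and $\zeta$, and evaluate at $t=0$. Differentiating in $t$ gives $\partial_t\chi_*(T)=\partial_t\chi_*(0)$ (the orbit-tangent solution is $T$-periodic), while differentiating in $q_0$ and in $\zeta$ gives
\[
\partial_{q_0}\chi_*(T)=\partial_{q_0}\chi_*(0)-(\partial_{q_0}T)\,\partial_t\chi_*(0),\qquad
\partial_\zeta\chi_*(T)=\partial_\zeta\chi_*(0)-(\partial_\zeta T)\,\partial_t\chi_*(0).
\]
Since $M(T)\tilde M(0)=\tilde M(T)$, expressing $M(T)$ in the basis $\{\chi^{\ren}_t(0),\chi^{\ren}_{q_0}(0),\chi^{\ren}_\zeta(0)\}$ (reordered so the tangent direction comes first) shows that $M(T)$ is conjugate to the unipotent matrix
\[
\begin{pmatrix}
1 & -\tfrac{c_1}{c_2}\,\partial_{q_0}T & -\tfrac{c_1}{c_3}\,\partial_\zeta T\\
0 & 1 & 0\\
0 & 0 & 1
\end{pmatrix}.
\]
The conceptual content is that \emph{both} non-time shears land purely along the orbit-tangent direction $\chi^{\ren}_t$, so $M(T)-I$ has rank at most one and squares to zero.

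From this normal form all three assertions follow at once. The characteristic polynomial is $(\lambda-1)^3$, so $\lambda_1=\lambda_2=\lambda_3=1$; as a cross-check, $\det M(T)=1$ also follows from Liouville's formula applied to~\eqref{eqn:sys-lin-alpha}, whose coefficient matrix has trace $4\alpha_*\beta_*-2\alpha_*\beta_*=2\alpha_*\beta_*=-\dot A_*/A_*$, integrating to $0$ over one period. Since $M(T)$ is unipotent, its unique real Floquet logarithm is $B_*T=\log M(T)=M(T)-I$, nilpotent with $(B_*)^2=0$, so $\spec(B_*)=\{0\}$. Because $M(T)-I$ has rank at most one, the eigenvalue $1$ has geometric multiplicity at least two — spanned by $\chi^{\ren}_t(0)$ and a suitable combination of $\chi^{\ren}_{q_0}(0)$ and $\chi^{\ren}_\zeta(0)$ — so $M(T)$ has at least two linearly independent eigenvectors. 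Finally $(B_*)^2=0$ gives $e^{B_*t}=I+tB_*$, hence $\norm{e^{B_*T}}\le1+\norm{B_*}T<1+cT$ with, say, $c=\norm{B_*}+1$.

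There is no serious obstacle here: the substance is simply that for this family of periodic orbits — parametrized, within a fixed level set of~\eqref{N}, by $q_0$, and transverse to those level sets by $\zeta$ — the reduced $3\times3$ Floquet problem is completely trivial, and the only secular growth is the phase drift $\partial_{q_0}T\neq 0$ along the orbit (the period~\eqref{eqn:Tperiod} being strictly monotone in $q_0$), producing a single Jordan block of size two. The only care required is the bookkeeping in the preliminary step — that the normalizing constants $c_i$ are genuinely $t$-independent and that the parametrized family and its period vary smoothly — which is precisely where the positivity $A_*>0$ and the bound $k<7/8$ from \secref{sec:peramp} enter.
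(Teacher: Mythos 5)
Your proposal is correct, and while it is built on the same raw material as the paper (the three derivative solutions $\partial_t\chi_*$, $\partial_{q_0}\chi_*$, $\partial_\zeta\chi_*$ of the variational equation \eqref{eqn:sys-lin-alpha} that form the columns of $\tilde M$), the way you extract the spectral information is genuinely different from the paper's proof. The paper (following Proposition 3.3 of \cite{MW}) gets $\lambda_1=1$ from periodicity of $\dot\chi_*$, a second unit multiplier from differentiation with respect to the period, and then pins down $\lambda_3=1$ by a separate argument: the time-reversal symmetry $\alpha(t)=\alpha(T-t)$, $\beta(t)=-\beta(T-t)$ forces $\int_0^T\alpha\beta\,ds=0$, so Liouville's formula gives $\det M(T)=1$; the existence of two independent eigenvectors is then asserted from the $t$- and $T$-derivatives. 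You instead differentiate the periodicity identity $\chi_*(t+T(q_0,\zeta);q_0,\zeta)=\chi_*(t;q_0,\zeta)$ in all three parameters, which exhibits $M(T)$ (in the basis of the three derivative vectors, whose independence at $t=0$ is exactly the invertibility of $\tilde M(0)$ checked in Section \ref{sec:fundsol}) as an explicit unipotent matrix whose off-diagonal part lies entirely along the orbit tangent. This yields all three multipliers, the rank-one structure $(M(T)-I)^2=0$, the geometric multiplicity at least two, and $e^{B_*t}=I+tB_*$ (hence the $1+cT$ bound) in a single stroke, with Liouville relegated to a cross-check --- and your trace identity $\tr B(t)=2\alpha_*\beta_*=-\dot A_*/A_*$ even makes that check independent of the symmetry argument. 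The prerequisites you flag (smooth dependence of the orbit and of the period \eqref{eqn:Tperiod} on $(q_0,\zeta)$, $A_*>0$, and $k$ bounded away from $1$) are precisely the ones established in Section \ref{sec:peramp}, so nothing is missing; your route buys a sharper structural statement, while the paper's is terser and simply recalls \cite{MW}. One small caveat, shared with the paper: identifying $B_*T$ with $\log M(T)=M(T)-I$ uses the principal (real, nilpotent) logarithm; other Floquet logarithms of a unipotent monodromy exist, so "the eigenvalues of $B_*$ are all zero" is a statement about that canonical choice.
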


\begin{proof}
This lemma is essentially Proposition $3.3$ in~\cite{MW}, but we recall the proof here for completeness.  It is clear that $\dot{\chi}=(\dot \alpha(t),\dot \beta(t), \dot A(t))$ is a solution to~\eqref{eqn:sys-lin-alpha}, giving at least one Floquet multiplier $\lambda_1 = 1$. Similarly, differentiation with respect to the period gives a similar result, implying $\lambda_1 = \lambda_2 = 1$.

The exact solutions  $\alpha$ and $\beta$,
satisfy the symmetries $\alpha (t) = \alpha (T-t)$ and $\beta (t) = -\beta (T-t)$.  Hence
\begin{equation}
\int_0^T \alpha (s) \beta(s) ds = 0.
\label{eqn:int_alpha_beta}
\end{equation}

In Floquet theory, Liouville's formula provides the determinant of the monodromy matrix $M_L$ for the system
$$
\dot{X} = L(t) X
$$
for $L(t+T) = L(t)$. It states
$$
\det M_L (T) = e^{\int_0^T {\rm Tr} (L) (s) ds}.
$$
Hence, given ${\rm tr} (B(t)) =2 \alpha(t) \beta(t)$, equation~\eqref{eqn:int_alpha_beta} implies
$\lambda_1 \cdot \lambda_2 \cdot \lambda_3 = 1$ and, as a result, $\lambda_3 = 1$.  

Note, using a similar approach to above, there exist at least two linearly independent vectors by using differentiation in $t$ and $T$.  

%\ONE{Added discussion of existence of linearly independent eigenvectors.  This just comes from differentiating with respect to $t$ and the period.}
\end{proof}

The above lemma shows that the $n$-times iterated monodromy operator satisfies the bound  
$$
\norm{M(nT)} \le 1 + C (n  T)  = \bigO{n  \sigma^{- \frac{1+\gamma}{2}} }
$$
for some positive constant $C$.
In order to show that the solution to~\eqref{eqn:nlsdwp-gK} stays close to the finite dimensional orbit $\chi_*$ on the time scale of shadowing, we require that the norm of $M(t)$ for $0< t < T$ does not grow too large to prevent us from applying the bootstrapping arguments in the proof of the main theorem from~\cite{MW}.  Hence, we require slightly better control on the growth of $M$ than we have currently shown.  Indeed, we require the following lemma bounding the operator within a the evolution of a single period.

\begin{lem}
\label{lem:fundsolbd}
Let $M(t)$ be the fundamental solution matrix defined by~\eqref{eqn:Mtdef},
then for $0<s,t<T \sim \sigma^{-\frac{1+\gamma}{2}}$, $\norm{M(t)} = \bigO{\sigma^{\frac{\gamma-1}{2} }}$.
\end{lem}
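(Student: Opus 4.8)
The plan is to estimate each of the three columns of $\tilde M(t)$ separately on the interval $0<t<T$, and then control $M(t)=\tilde M(t)\tilde M^{-1}(0)$ by bounding $\|\tilde M(t)\|$ and $\|\tilde M^{-1}(0)\|$. First I would observe that $\tilde M^{-1}(0)$ is $\bigO{1}$: from the explicit values of $\chi_{q_0}^{\ren}(0)$, $\chi_t^{\ren}(0)$, $\chi_\zeta^{\ren}(0)$ computed above, the matrix $\tilde M(0)$ is lower/upper triangular up to permutation with $\bigO{1}$ entries and determinant of size $\bigO{1}$ (the only potentially large entry, $-C\dd^{(1-\gamma)/2}$, sits in a corner that does not enter the determinant or the cofactors dangerously), so $\|\tilde M^{-1}(0)\|=\bigO{1}$. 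Hence it suffices to show $\|\tilde M(t)\|=\bigO{\sigma^{(\gamma-1)/2}}$ on $0<t<T$.

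The core of the argument is then a direct estimate of the three solution vectors $\partial_t\chi$, $\partial_{q_0}\chi$, $\partial_\zeta\chi$ (in their renormalized form) using the closed-form solution $q(\tau)=q_0\cn(\omega\tau,k)/\sqrt{1+q_0^2\sn^2(\omega\tau,k)}$ from Section~\ref{s:exact}, together with $p=\dot q/(1+q^2)$ and $A=(N-\alpha^2-\beta^2)^{1/2}$, and the rescaling~\eqref{eqn:rescaling}. For the $T$-periodic column $\chi_t^{\ren}$, the bound is $\bigO{1}$ since $\dot q,\dot p$ are bounded by $\bigO{1}$ quantities (recall $k\le 7/8$, so $\cn,\dn,\sn$ and their derivatives are uniformly bounded away from the separatrix) and the $A$-component carries the factor $A^{-1}=\bigO{\sigma^{-\gamma/2}}$ against a numerator $\alpha\dot\alpha+\beta\dot\beta=\bigO{\sigma}$, giving $\bigO{\sigma^{1-\gamma/2}}$, which is small. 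For the two secularly-growing columns $\chi_{q_0}^{\ren}$ and $\chi_\zeta^{\ren}$, differentiating $q(\tau;\zeta,q_0)$ in a parameter produces, via the chain rule through $\omega=\omega(\zeta,q_0)$ and $k=k(\zeta,q_0)$ and $\tau=\Omega_{10}t$, a term proportional to $t\,\partial_k\cn(\omega\Omega_{10}t,k)\cdot(\text{bounded})$; the linear-in-$t$ growth is the expected Floquet behavior from Lemma~\ref{lem:eigen}. On $0<t<T=\bigO{\sigma^{-(1+\gamma)/2}}$ this linear growth is at worst $\bigO{\sigma^{-(1+\gamma)/2}}\cdot(\text{coefficient})$; the coefficient, after accounting for the renormalization by $A(0)=\bigO{\sigma^{\gamma/2}}$ and by $\Omega_{10}=\bigO{\sigma^\gamma}$ and the $\partial_{q_0}k,\partial_\zeta k=\bigO{\sigma^{-(1-\gamma)/2}}$ scalings read off from~\eqref{k-def}, should combine to give overall size $\bigO{\sigma^{(\gamma-1)/2}}$, matching the claimed bound. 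I would organize this as: (i) bound the elliptic functions and their $k$-derivatives uniformly for $k\le 7/8$; (ii) bound $\partial_{q_0}$ and $\partial_\zeta$ of $\omega$, $k$, and the prefactor $q_0$; (iii) assemble the three columns and take the max.

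The main obstacle I anticipate is bookkeeping the competing powers of $\sigma$ in the secular columns: the product of the small factors ($\Omega_{10}=\bigO{\sigma^\gamma}$ in $\tau=\Omega_{10}t$, the amplitude prefactor $\sigma^{1/2}$ in~\eqref{eqn:rescaling}) against the large factors ($t$ up to $\sigma^{-(1+\gamma)/2}$, the parameter-derivatives of $k$ of size $\sigma^{-(1-\gamma)/2}$, and $A^{-1}=\sigma^{-\gamma/2}$ in the third component) must land exactly on $\sigma^{(\gamma-1)/2}$ and not worse. A secondary subtlety is that $\partial_t\chi$ vanishes at isolated times (e.g. where $\dot q=0$), but this is harmless: we only need an upper bound on $\|M(t)\|$, not a lower bound, and the columns of $\tilde M(t)$ remain linearly independent for all $t$ by Floquet theory since the renormalized vectors were chosen independent at $t=0$. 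Once the column bounds are in hand, $\|M(t)\|\le\|\tilde M(t)\|\,\|\tilde M^{-1}(0)\|=\bigO{\sigma^{(\gamma-1)/2}}\cdot\bigO{1}$ completes the proof, and combined with the per-period growth bound $\|M(nT)\|\le 1+C(nT)$ from Lemma~\ref{lem:eigen} this establishes~\eqref{Mbound} over the full shadowing interval, finishing the verification of hypothesis (H2).
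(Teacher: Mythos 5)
Your proposal follows essentially the same route as the paper: bound $\norm{M(t)}\le\norm{\tilde M(t)}\,\norm{\tilde M^{-1}(0)}$ with $\tilde M^{-1}(0)=\bigO{1}$, then estimate the columns of $\tilde M(t)$ from the explicit elliptic-function solution, separating the bounded (periodic) contributions from the secular, linear-in-$t$ ones generated by the parameter derivatives through $\omega(\zeta,q_0)$ and $k(\zeta,q_0)$, and assembling the powers of $\sigma$ over $0<t<T\sim\sigma^{-(1+\gamma)/2}$. The only discrepancy is a bookkeeping one: the paper's scalings are $\partial_{q_0}k=\bigO{\sigma^{\frac{\gamma-1}{2}}}$ but $\partial_\zeta k=\bigO{\sigma^{\gamma-1}}$ (not $\bigO{\sigma^{\frac{\gamma-1}{2}}}$ as you state), yet the remaining factors $\Omega_{10}^{3/2}q_0\,\omega\,t\,K'(k)/K^2(k)$ compensate so the worst secular term still lands within $\bigO{\sigma^{\frac{\gamma-1}{2}}}$, exactly as in the paper's final estimate.
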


\begin{proof}
Note, it suffices to prove this theorem for $\tilde{M}$ in~\eqref{eqn:tilMtdef}.  To prove this, we may simply calculate the entries in $\tilde{M}(t)$ and find the maximum magnitude of each as a function of time over all values of $0 \le t < T$. To begin, we recall the functional form the exact solution for $\alpha (t)$:
\begin{equation*}
\alpha (t) = \sqrt{\frac{\Omega_{10}}{2}} q_0 \frac{\cn (\omega \Omega_{10} t, k ) }{\sqrt{1 + q_0^2 \sn^2 (\omega  \Omega_{10} t, k ) } }.
\end{equation*}
There are similar expressions for $\beta(t)$ and $A(t)$.  We will make use of the fact that $\cn(z,k)$, $\sn(z,k)$, and their $z$-derivatives are bounded functions for all $k$ and $z$, and that derivatives with respect to $k$ exhibit linearly-growing oscillations for fixed $k$ and increasing $z$.  These bounds take advantage of the extensive literature on Jacobi elliptic functions~\cite{Byrd:1954,NIST:DLMF}.

Since we have chosen $q_0$ and $\omega$ such that $k$, $K$ and $K'=\tfrac{dK}{dk}$ are uniformly bounded, we can find similar bounds for $\beta$ and $A$ but must be mindful of the dependence on the asymptotic parameter $\sigma$ in constructing the fundamental solution matrix.  
We observe that the fundamental solution matrix can be represented in terms of the column vectors defined in~\eqref{eqn:Mtdef}.
Note, under such a choice 
\begin{align*}
\tilde{M} (0 ) &\equiv I_3 - C \sigma^{\frac{1}{2}-\frac{\gamma}{2}} 
%\left(\begin{array}{ccc}
\begin{bmatrix}
0 & 0 & 0 \\
0 & 0 & 0 \\
1  & 0 & 0
\end{bmatrix}\\
%\end{array} \right)
\intertext{and}
{\tilde{M}}^{-1} (0 ) &\equiv I_3  + C \sigma^{\frac{1}{2}-\frac{\gamma}{2}}   
%\left(\begin{array}{ccc}
\begin{bmatrix}
0 & 0 & 0 \\
0 & 0 & 0 \\
1  & 0 & 0
\end{bmatrix}.
%\end{array} \right).
\end{align*}
A calculation relying upon boundedness of $\cn$, $\sn$ and their derivatives in $k$ reveals that in addition to the leading-order terms at $t = 0$ discussed above, the implicit differentiation in the columns of $M(t)$ does not introduces terms that do not lead to linear growth in $t$ (i.e.\ purely periodic solutions) and terms that have linear growth as observed in the orbits in~\cite{MW}, Proposition $3.3$.  

The terms without linear growth have coefficients of the form:
\begin{equation}
\begin{split}
\label{eqn:percoeffs}
 \frac{\sqrt{\Omega_{10}} q_0}{\Tper},  \;   
 \frac{-\sqrt{\Omega_{10}} q_0}{k} \frac{\partial k }{\partial \zeta}, \; 
\frac{-\sqrt{\Omega_{10}} q_0 }{K (k)} \frac{\partial k }{\partial \zeta},  \; 
 \frac{-\sqrt{\Omega_{10}} q_0 }{k}\frac{\partial k }{\partial q_0} , \\ 
 \text{and }
  \frac{-\sqrt{\Omega_{10}} q_0}{K (k)} \frac{\partial k }{\partial q_0}, \; 
\sqrt{\Omega_{10}} q_0 \kappa' (k)  \frac{\partial k }{\partial \zeta}, \;  
\sqrt{\Omega_{10}} q_0 \kappa '(k) \frac{\partial k }{\partial q_0}, 
\end{split}
\end{equation}
 and have bounds maximum bounds given by
 \begin{equation}
\label{eqn:tdepbds}
 \frac{\partial k }{\partial q_0} = \bigO{\sigma^{\frac{\gamma-1}{2} } },  \frac{\partial k }{\partial \zeta} = \bigO{\sigma^{\gamma-1 } }.
 \end{equation}
The terms that do grow linearly in $t$ are of the form:
\begin{equation}
\label{eqn:tcoeffs}
\frac{\Omega_{10}^{3/2} q_0}{K(k)}  \frac{\partial \w}{\partial {q_0}}  t, \;
\frac{\Omega_{10}^{3/2} q_0}{K(k)}  \frac{\partial \w}{\partial \zeta}  t, \\ 
\frac{\Omega_{10}^{3/2} q_0  K'(k)}{K^2(k)}  \frac{\partial k}{\partial {q_0} }  \omega t , \;
\text{and }
\frac{\Omega_{10}^{3/2} q_0  K'(k)}{K^2(k)}  \frac{\partial k}{\partial \zeta }  \omega t.
\end{equation}

Noticing that
\[
\norm{M(t)} \leq \norm{\tilde{M}(t) }\norm{\tilde{M}^{-1}(0) },
 \]
on the time scale of interest and keeping track of the renormalization constants, we observe the fundamental solution bound is given by, for instance, the largest of the time-dependent terms.  Hence,
\begin{align*}
\norm{\tilde{M} (t) } & \lesssim  \Omega^{\frac{3}{2}}_{10} q_0 \omega t  \partial_{\zeta} k \left(  K^{-2} (k) K'(k) \right)  \\
& \hspace{.1cm} \lesssim \sigma^{\frac{3\gamma}{2}} \sigma^{\frac{1-\gamma}2 }  \sigma^{\frac{1-\gamma}{2}} \sigma^{- \frac{1+\gamma}{2}} \sigma^{-1 + \gamma}  \mathcal{O} (1)  \\
& \hspace{.2cm} \lesssim \bigO{\sigma^{\frac{\gamma-1}{2} } },
\end{align*}
using 
\begin{equation*}
\Omega_{10} \sim \sigma^{\gamma}, \ \omega \sim \sigma^{\frac{1-\gamma}{2}}, \ t < \Tper = \sigma^{- \frac{1+\gamma}{2}} 
\end{equation*}
in addition to~\eqref{eqn:tdepbds}.  This is the desired bound for $M(t)$ given $0\leq t \leq T$.  
\end{proof}

\subsection{Brief Recap of the Perturbative Methods from~\cite{MW}}
\label{sec:recap}

Lemma~\ref{lem:fundsolbd} implies that the evolution within one period is also bounded, hence the Duhamel evolution operator, $M(t)M^{-1}(s)$ for $0< t-s <T$, used in the perturbation theoretic arguments from~\cite{MW} has the same bound.  Coupling this implicit Duhamel evolution bound, the period bound from~\eqref{eqn:orbitTdef}, and the amplitude bounds from~\eqref{eqn:ampbds1} and~\eqref{eqn:ampbds2} shows that assumptions~\eqref{Tdelta},~\eqref{Mbound} from Theorem~\ref{thm:main-eq} and the amplitude bounds in Lemma~\ref{lem:ampbds} all hold for the exact solution.  Hence, the dynamical solutions satisfy the assumptions of Theorem $5.1$ in~\cite{MW} for proving shadowing of the dynamical system for orbits outside the separatrix in~\eqref{eqn:alphabeta} and large orbits can be shadowed for long times in~\eqref{eqn:nlsdwp-gK}!
%\TWO{RE (N) and (O): In the paragraph above, the words ``necessary and sufficient'' have been removed, and the language surrounding verifying the assumptions for Theorem $5.1$ from~\cite{MW} clarified.  Parentheses now around reference to eqn (1.10), formerly 1.9, as noted by Ref. 1 as well.}

Following the analysis in~\cite{MW}, we now write the system~\eqref{eqn:sys-lin} with the orbit ${\chi}_*$ given by an exact periodic orbit of the finite dimensional truncation:
 \begin{align*}
 \chi(t) &= \chi_*(t) + \eta,\\
  &\equiv \left(\tilde{A}(t)+ \eta_A(t), \tilde{\alpha}(t)+ \eta_\alpha(t), 
  \tilde{\beta}(t)+\eta_\beta(t) \right),
 %\label{eta-def}
\end{align*}
where the $\eta_A$, $\eta_\alpha$ and $\eta_\beta$ will account for the error terms in gluing the finite dimensional model equations into the full infinite dimensional model.
This corresponds to a solution of the form:
\begin{equation*}
u(x,t) = e^{i \theta(t)} \left((\tilde{A}(t) + \eta_A(t) )\, \psi_0 + [(\tilde{\alpha} (t) + \eta_\alpha(t)) + i (\tilde{\beta} (t) + \eta_\beta(t))] \psi_1 + R(x,t)\right) 
\end{equation*}
with initial conditions
\begin{equation*}
u_0(x) = 
e^{i \theta(0) } \left(\, \tilde{A}(0)\, \psi_0(x) + [\tilde{\alpha}(0)+ i \tilde{\beta}(0)]\psi_1(x)\right).
\end{equation*}

Centered about ${\chi}_*$, equation~\eqref{eqn:nlsdwp-gK} becomes the system:
\begin{align*}
\dot{\vec{\eta}} & = D_{{\chi}} F_{\rm FD} \left({\chi}_*(t)\right) \vec{\eta} + [\vec{F}_{\rm FD} (\chi_{*} + \vec{\eta}) - \vec{F}_{\rm FD} (\chi_*)- D_{\chi} \vec{F}_{\rm FD} (\chi_*) \vec{\eta} ] + \vec{G}_{\rm FD} (\chi_{*},\vec{\eta};R,\bar{R})  \\
i R_t   & =  (H - \Omega_0) R + (\tilde{A}^2 + 3 \tilde{\alpha}^2 + \tilde{\beta}^2) R + F_b \left({\chi}_{*}+{\eta}\right) + F_R \left({\chi}_{*} + {\eta}; R, \bar{R}\right)  \\
& \; \;   + \left(\ A^2-\tilde{A}^2\ +\ 3(\alpha^2-\tilde{\alpha}^2)\ +\, \beta^2-\tilde{\beta}^2\right) R
\end{align*}
with
\begin{equation*}
%\label{thetadot} 
\dot{\theta}  =  -\Omega_0 + A^2 + 3 \alpha^2 + \beta^2 + G_\theta (R, \bar{R}; A, \alpha, \beta).
\end{equation*}
Here, $\vec{F}_{\rm FD}$ and $F_b$ represent the finite dimensional and infinite dimensional projections of the terms dependent only upon $\chi (t)$ in the ansatz.  Also, $G_{\rm FD}$ and $G_\theta$ represent the interaction terms in the finite dimensional between $R$ and $\chi$, as given by the terms $\mathit{Error}_A$, $\mathit{Error}_\alpha$, $\mathit{Error}_\beta$, $\mathit{Error}_\theta$ from Appendix A of~\cite{MW}.  The term $F_R$ is the interaction term in the infinite dimensional evolution, as it is in Appendix A of~\cite{MW}. In particular, the evolution equations for $\vec \eta$ are similar to those for the finite dimensional linearization $(\delta \alpha, \delta \beta, \delta A)$ in~\eqref{eqn:sys-lin-alpha} but now with an additional forcing term given by $G_{\rm FD}$.  See Appendix $A$ of~\cite{MW} for full details.
%\TWO{RE (I): Error terms, especially $G_{\theta}$ clarified and notation matched to~\cite{MW}.}

The estimates
\begin{equation*}
\left|\, \vec{F}_{\rm FD} (\chi_{*} + \vec{\eta}) - \vec{F}_{\rm FD} (\chi_{*}) -  D_{\chi} F_{\rm FD} (\chi_*)\vec{\eta}\right| = 
\bigO{\tilde{A} |\vec{\eta}|^2 + |\vec{\eta}|^3\ }
\end{equation*}
are derived in~\cite{MW}. We decompose $R =\tilde{R} + w$, where $\tilde{R}$ the leading order part of $R$, driven
by the periodic solution $\chi_*(t)$ as defined in~\eqref{eqn:tildeR}, and $w$ is a correction.
Introduce, $\tilde{M}(t)$, a fundamental solution matrix for the system of ODEs with time-periodic coefficients:
\begin{equation}
\partial_t\eta = D_\chi F(\chi_*(t))\, \eta.
\nn\end{equation}

Then, we are able to study the following system of integral equations
 for $\eta(t), \ w(x,t), \, \theta (t)$:
\begin{align*}
\vec{\eta} (t) & = \int_0^t  \tilde{M}(t) \tilde{M}^{-1} (s) \\
& \hspace{1cm} \times \left[ \vec{F}_{\rm FD} (\chi_{*}+{\eta}) -  \vec{F}_{\rm FD} (\chi_{*}) - D_\chi \vec{F}_{\rm FD} (\chi_{*}) \vec{\eta}+ \vec{G}_{\rm FD} (\chi_{*},\vec{\eta};R,\bar{R}) \right] ds , \\
w(x,t) &= \int_0^t e^{iH(t-s)-i \Omega_0 (t-s) +i \int_s^t (\tilde{A}^2 + 3 \tilde{\alpha}^2 + \tilde{\beta}^2) (s') ds'} \\
& \hspace{1cm} \times P_c \left[ (F_b (\chi_{*} + \vec{\eta}) - F_b (\chi_{*})) + 
F_R(\chi_{*},\vec{\eta};R,\bar{R}) \right] ds  , \\
\theta &=  \theta_0 + \int_0^t \left[-\Omega_0 + A^2 + 3 \alpha^2 + \beta^2 + G_\theta (R, \bar{R}; \chi_{*},\vec{\eta}) \right] ds.
\end{align*}
%\RG{Is there any $\theta$-dependence hidden on the right hand side of this equation? Or this really a system of integral equations for $\eta$ and $w$? It appears we can split $\theta$ out of this expression. Further, this equation as written hides the $x$-dependence of $w$. Would it be clearer to put it in?}
%\JLM{The $\theta$ dependence has indeed been separated completely.  I think it is okay as is, after all, the reader can look at~\cite{MW} for more details}
We view a solution, $(\vec{\eta},w)$, of this system of integral equations  as fixed point of a mapping~$\mathcal{M}$:
 \begin{equation}
 (\vec{\eta},w) = \mathcal{M} (\vec{\eta},w).
\label{fixedpoint}
 \end{equation}

Given the bounds proven above in the proof of Theorem~\ref{thm:main-eq}, $\mathcal{M}$ is a contraction map in a particular Banach space in both $x$ and $t$ designed to optimally measure the dynamics of both $\eta$ and $w$. To that end, we recall the Strichartz space
\begin{equation*}
L^p_t W^{k,q}_x = L^p ([0,T^*] ; W^{k.q} (\mathbb{R})),
\end{equation*}
where $\frac{2}{p} + \frac{1}{q} = \frac12$ and $T^*$ is given in Theorem~\ref{thm:main-eq}.
Following~\cite{MW}, we define the  space 
\begin{align*}
X(I) & = X([0,T_*(\dd)]) \\
 & = \left\{\ (\vec{\eta},w): \eta\in L^\infty_t([0,T_*(\dd)]),\; w\in L^\infty_t([0,T_*(\dd)]; H^1_x) \cap L^4_t([0,T_*(\dd)]; L^\infty_x)
\right\}
\end{align*}
equipped with the natural norm
\begin{equation*}
\| (\vec{\eta},w) \|_{X(I)} = \| \vec{\eta} \|_{L^\infty_t(I)} + \| w \|_{L^4_t(I; L^\infty_x)} + \| w \|_{L^\infty_t(I; H^1_x)},
\end{equation*}
 \begin{equation}
{\rm where}\; \;  I=[0,T_*(\dd)].
 \nn\end{equation}
  We define $B_\dd (I) \subset X(I)$ such that $(\vec{\eta},R) \in B_\dd (I)$ if and only if
\begin{equation*}
%\label{eqn:contmapball}
\| (\vec{\eta},w) \|_{X(I)}  \le \dd^{\frac{1}{2}+\delta_1} ,
\end{equation*}
 where $\delta_1> 0$ must be chosen in the course of the analysis. 
 
Then the following proposition makes the desired contraction mapping precise.
\begin{prop} 
  The mapping $\mathcal{M} : X(I) \to X(I)$, defined in~\eqref{fixedpoint},  
has the properties
\begin{enumerate}
\item $\mathcal{M}: B_\dd(I) \to B_\dd(I)$.
\item There exists $\kappa<1$ such that given
$(\vec{\eta}_j,w_j) \in  B_\dd(I)$ for $j = 1,2$, 
\begin{equation*}
d(\mathcal{M}(\vec{\eta}_1,w_1),\mathcal{M}(\vec{\eta}_2,w_2)) \leq\, \kappa\  d((\vec{\eta}_1,w_1) , (\vec{\eta}_2,w_2)).
\end{equation*}
\end{enumerate} 
Thus, there exists a unique solution $(\vec{\eta},w)$ in 
$B_\dd(I)$.
\end{prop}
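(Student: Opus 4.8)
The plan is to establish the two claims — that $\mathcal{M}$ maps $B_\dd(I)$ into itself and that it contracts distances there — by the Banach fixed point scheme of~\cite{MW}, Section~4, the only modification being that the fundamental solution matrix is now the explicitly constructed $\tilde M(t)$ of~\eqref{eqn:tilMtdef}, so that the Duhamel kernel obeys the bound of Lemma~\ref{lem:fundsolbd} and the remark following it, $\norm{M(t)M^{-1}(s)} = \bigO{\dd^{\frac{\gamma-1}{2}}}$ for $0<s,t<\Tper$, rather than a near-equilibrium exponential. One fixes $\delta_1 > 0$ small, works in the norm $\norm{(\vec\eta,w)}_{X(I)} = \norm{\vec\eta}_{L^\infty_t(I)} + \norm{w}_{L^4_t(I;L^\infty_x)} + \norm{w}_{L^\infty_t(I;H^1_x)}$, and uses throughout: the period bound $\Tper \lesssim \dd^{-\frac{1+\gamma}{2}}$ from~\eqref{eqn:orbitTdef}, hence $|I| = \Tper\,\dd^{-\epsilon} \lesssim \dd^{-\frac{1+\gamma}{2}-\epsilon}$; the amplitude bounds $|\tilde A| \lesssim \dd^{\gamma/2}$ and $|\tilde\alpha|,|\tilde\beta| \lesssim \dd^{1/2}$ from~\eqref{eqn:ampbds1}--\eqref{eqn:ampbds2}; and the known bound $\norm{\tilde R}_{L^\infty_{t,x}} \lesssim \dd^{1+\delta_0}$ on the driving term $\tilde R$ of~\eqref{eqn:tildeR}.

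For the $\vec\eta$ slot one estimates the integral equation with kernel $M(t)M^{-1}(s)$. Its bracketed integrand splits into the purely algebraic remainder $\vec F_{\rm FD}(\chi_*+\vec\eta) - \vec F_{\rm FD}(\chi_*) - D_\chi \vec F_{\rm FD}(\chi_*)\vec\eta = \bigO{\tilde A|\vec\eta|^2 + |\vec\eta|^3}$ (the estimate already recorded in~\cite{MW}) and the coupling term $\vec G_{\rm FD}(\chi_*,\vec\eta;R,\bar R)$, which — see Appendix~A of~\cite{MW} — is a sum of monomials at least linear in $R = \tilde R + w$ with coefficients polynomial in $\tilde A,\tilde\alpha,\tilde\beta$ and $\vec\eta$. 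Pulling out $\norm{M(t)M^{-1}(s)} \lesssim \dd^{\frac{\gamma-1}{2}}$, integrating over $I$, and inserting the amplitude, $\tilde R$, and ball bounds $\norm{(\vec\eta,w)}_{X(I)} \le \dd^{\frac12+\delta_1}$ converts each monomial into a single power of $\dd$; one checks, monomial by monomial, that each such power is at least $\tfrac12+\delta_1$ once $\dd$ is small, which gives $\mathcal{M}:B_\dd(I)\to B_\dd(I)$ on this slot. The same computation with $(\vec\eta,w)$ replaced by a difference $(\vec\eta_1-\vec\eta_2,\,w_1-w_2)$ — which appears as at least one linear factor in every monomial — produces a Lipschitz constant $\le C\dd^{\kappa_0}$ with $\kappa_0>0$, hence $<1$ for $\dd$ small.

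For the $w$ slot one uses the dispersive and Strichartz estimates for $e^{-i(H-\Omega_0)t}P_c$ on the range of $P_c$, valid under the spectral hypotheses on $H_\ell$; the admissible pair $(4,\infty)$ and the energy pair $(\infty,2)$ reproduce the two $w$-norms of $X(I)$. The forcing $P_c\bigl[(F_b(\chi_*+\vec\eta) - F_b(\chi_*)) + F_R(\chi_*,\vec\eta;R,\bar R) + (A^2-\tilde A^2 + 3(\alpha^2-\tilde\alpha^2) + \beta^2-\tilde\beta^2)R\bigr]$ is spatially localized, being built from products of the exponentially decaying modes $\psi_0,\psi_1$, so that $L^1_x$, $L^2_x$ and $H^1_x$ of these terms are mutually comparable; and it is at least linear in $\vec\eta$ in the first and third brackets and at least quadratic in $(\vec\eta,R)$ in $F_R$. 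Estimating it in the dual Strichartz space over $I$ (using the $L^{4/3}_t L^1_x$ endpoint, rather than the crude $L^1_t L^2_x$ bound, to keep the time-interval loss to $|I|^{3/4}$) and feeding in the same power counting yields $\norm{w}_{L^\infty_t H^1_x} + \norm{w}_{L^4_t L^\infty_x} \le \tfrac12\dd^{\frac12+\delta_1}$ on $B_\dd(I)$, together with a matching Lipschitz bound. The scalar phase $\theta(t)$ solves a decoupled integral equation with $O(1)$, Lipschitz-in-$(\vec\eta,w)$ right-hand side, so it is handled last and contributes only the stated $C^1$ regularity; combining the three slots gives both assertions of the proposition and hence a unique fixed point $(\vec\eta,w) \in B_\dd(I)$.

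The part that merely replays~\cite{MW} is the algebraic structure of the terms and the difference (Lipschitz) estimates. The main obstacle is the power bookkeeping: the Duhamel kernel now only satisfies $\norm{M(t)M^{-1}(s)} \lesssim \dd^{(\gamma-1)/2}$ — genuine growth, since the orbit is not near an equilibrium — while the shadowing interval is as long as $\dd^{-(1+\gamma)/2-\epsilon}$, so one must verify that after multiplying these against the amplitude and $\tilde R$ smallness every resulting exponent still clears $\tfrac12+\delta_1$ with slack enough to absorb the losses $\dd^{-\epsilon}$, $\dd^{-\delta_0}$, $\dd^{-\delta_1}$. This is precisely where the constraint $\tfrac{7}{9} < \gamma < 1$ (inherited from the asymptotic scheme of~\cite{MW}) is used; if the period bound of Section~\ref{s:exact} or the monodromy bound of Section~\ref{sec:rescaled} were any weaker, the fixed point argument would fail to close.
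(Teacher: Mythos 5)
Your proposal is correct and follows essentially the same route as the paper, which likewise proves this proposition by feeding the new ingredients — the Duhamel kernel bound of Lemma~\ref{lem:fundsolbd}, the period bound~\eqref{eqn:orbitTdef}, the amplitude bounds~\eqref{eqn:ampbds1}--\eqref{eqn:ampbds2}, and the $\tilde R$ bound — into the contraction-mapping and Strichartz machinery of Section~5 and Appendix~A of~\cite{MW}, with the power bookkeeping over the interval $[0,\Tper\,\dd^{-\epsilon}]$ closing precisely because $\tfrac{7}{9}<\gamma<1$. Your extra remarks (the $L^{4/3}_tL^1_x$ dual pair, spatial localization of the forcing, treating $\theta$ last) are consistent refinements of that same argument rather than a different approach.
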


The main result then follows by applying the asymptotic analysis from Section~$5$ of~\cite{MW} to prove for example bounds of the form
\begin{align*}
\| \vec{\eta} \|_{L^\infty_t}  = & \Big{\lVert} \int_0^t M (t) M^{-1}(s) \big[ \vec{F}_{\rm FD} (\chi_{*} + \vec{\eta}) -  \vec{F}_{\rm FD} (\chi_{*}) - D_{\chi^*} \vec{F}_{\rm FD} (\chi_{*}) \vec{\eta}  \\
&  + \vec{G}_{\rm FD} (\chi_{*} + \vec{\eta};R,\bar{R}) \big]  ds \Big{\rVert}_{L^\infty}
\end{align*}
and
\begin{align*}
\norm{w}_{L^\infty_t H^1_x \cap L^4_t L^\infty_x} & =  \Big{\lVert} \int_0^t e^{iH(t-s)}  e^{i \Omega_0 (t-s)} e^{i \int_s^{t} (\tilde{A}^2 + 3 \tilde{\alpha}^2 + \tilde{\beta}^2) (s') ds'} \\
& \times P_c \left[ (F_b (\chi_{*}+\vec{\eta}) - F_b (\chi_{*})) + F_R (\chi_{*},\vec{\eta};R,\bar{R}) \right] ds \Big{\rVert}_{L^\infty H^1 \cap L^4 L^\infty} ,
\end{align*}
the proofs of which rely heavily upon the bounds presented in Sections~\ref{sec:peramp} and~\ref{sec:fundsol} above.

\section{Discussion, Further Problems, and a Remark}
\label{sec:discussion}
We consider NLS/GP with a symmetric double well potential having sufficiently separated wells.
We have shown that there are periodic solutions of NLS/GP which, on a long time scale, shadow  periodic orbits 
of the finite dimensional ODE model both inside and outside the separatrix. An obvious question is whether similar behavior can be shown in systems with multiple potential wells. It is shown in~\cite{Goodman:2011,Goodman:2014} that periodic orbits and a class of quasi-periodic orbits arise as Hamiltonian Hopf bifurcation.  Work is underway to show that these periodic orbits are shadowed in the full PDE model.   

%In particular, as one increases the number of wells, the dynamics required for oscillation from one well to the next might give insight into the celebrated Peierls-Nabarro barrier for discrete nonlinear Schr\"odinger systems arising in a tight-binding limit.

\begin{rem}
The extension of the proof of~\cite{MW} to the more general class of periodic orbits considered here was facilitated by a closed-form solution of the reduced system.  It is instructive to consider  how this was used:
\begin{enumerate}
\item Bounds on the solution's period are needed in order to obtain bounds on a Duhamel operator in the proof of~\cite{MW}, Page $21$, Equation $(4.4)$. The necessary bounds follow from the explicit solution.  In the case where there is no explicit solution, for trajectories near the separatrix the dominant contribution to the period can be computed by an asymptotic analysis near the saddle fixed point at the origin.
\item In Lemma~\ref{lem:eigen}, the symmetries of the exact solution are used to show that the monodromy matrix has an eigenvalue $\lambda=1$ of algebraic multiplicity three and geometric multiplicity two. This information is obtainable from the symmetry of the equation and does not require the exact solution.
\item The exact solution simplifies the estimate of the bounds of the various terms in the fundamental solution operator, equations~\eqref{eqn:percoeffs}-\eqref{eqn:tcoeffs}. In the absence of exact formulas, these bounds require terms beyond first-order in the expansion of $M(t)$. An alternate way to determine them is to first put the Hamiltonian system~\eqref{eqn:alphabeta} into normal form, which agrees the Hamiltonian of the Duffing oscillator to leading order. This is demonstrated in the Appendix using the method of Lie Transforms.  Truncation in the normal form transformation introduces a secondary source of error that must be controlled carefully in any estimates.
\end{enumerate}
\end{rem}

\section*{Acknowledgments} RHG received support from NSF DMS--0807284.  JLM was supported by U.S. NSF Grant DMS--1312874, an IBM Junior Faculty Development Award from UNC and a Guest Professorship from Universit\"at Bielefeld.  MIW was supported in part by U.S. NSF Grant DMS--10-08855, and the Columbia Optics and Quantum Electronics IGERT NSF grant DGE--1069420.  The authors also wish to thank the anonymous reviewers for a careful reading of the draft and many useful suggestions that improved the exposition greatly.

\appendix
%\ONE{Appendix completely rewritten, shortened, and refocused to address referee complaints. See cover letter for explanation.}

\section{Outline of approach using Hamiltonian normal forms}
\label{a:lie}

A key ingredient in the proof of shadowing is prove bounds on the fundamental solution matrix operator
for a reduced system. While these can be studied via the explicit solutions of the reduced system arising for double wells, more generally explicit solutions are unavailable and therefore a different approach must be taken. For example, in the case where $V(x)$ is a triple well potential, the orbits can only be calculated via perturbative methods~\cite{Goodman:2011,Goodman:2014}. In this appendix we give a brief sketch of the general normal form method and the results of its implementation for~\eqref{eqn:alphabeta}, in the regime where $\sigma$ is a small parameter; see~\cite{Meyer:2010} for details.

Consider a general Hamiltonian of the form 
\begin{equation}
H(Q,P) = H_0(Q,P) + \sigma f(Q,P;\sigma),
\label{H0f}
\end{equation}
where $\sigma\ll1$, $H_0$ is quadratic (so that the associated differential equations are linear and thus integrable) and $f$ has a finite or convergent power series. A normal form for the Hamiltonian~\eqref{H0f} is an equivalent but simpler Hamiltonian obtainable by a suitable canonical change of variables $(Q,P) = \Phi(q,p) = (q,p) + \ldots$ under which new Hamiltonian $K(q,p) = H(\Phi(q,p)) = H_0(q,p) + \sigma g(q,p;\sigma)$. Roughly, the function $g(q,p;\sigma)$ should contain as few terms as possible at each order in perturbation theory. It is well-known~\cite[\S10.4]{Meyer:2010}  that for a Hamiltonian like~\eqref{HDW} with leading-order part
$H_0 = \tfrac{\W_{10}}{2}P^2$,
the normal form Hamiltonian is
$$
K(q,p) = \frac{\W_{10}}{2}p^2 + \sigma g(q;\sigma).
$$
That is, the normal form equation is separable, with $p$-dependent kinetic and $q$-dependent potential energy.

Using perturbative technique based on the \emph{Lie transform}, we can calculate the change of coordinates and the transformed Hamiltonian to arbitrary order using Mathematica:
\begin{align*}
\alpha = Q & =  \left(
1
+\frac{q^2}{3 \Omega_{10}}
+\frac{q^4}{30 \Omega_{10}^2}
+\frac{q^6}{630 \Omega_{10}^3}
+ O{\left(q^8\right)}
\right) q; 
\\
\beta = P & =  \left(
1
-\frac{q^2}{\Omega_{10}}
+\frac{5 q^4}{6 \Omega_{10}^2}
-\frac{61 q^6}{90 \Omega_{10}^3}
+ O{\left(q^8\right)}
\right)p; \\
K(q,p) & = 
\frac{\Omega_{10}}{2}p^2  
- \frac{\sigma }{2}q^2  
+ q^4
+\frac{- \sigma q^4 + 4 q^6}{3 \Omega_{10}}
+\frac{4 \left(-q^6 \sigma + 9 q^8 \right)}{45 \Omega_{10}^2} + 
O{\left(\sigma q^8,q^{10}\right)}.
\end{align*}

The three leading terms of $K(q,p)$ are equivalent to the 
Duffing Hamiltonian~\eqref{eqn:duffing}, which has well-known solutions involving Jacobi elliptic functions~\cite{NIST:DLMF}.  These solutions then form the leading order parts of Poincar\'e-Lindstedt approximations to true periodic orbits of $K(q,p)$, which can be computed to arbitrary order using computer algebra.

While the asymptotic series defining a normal form change of variables does not generally converge, there exist, in some cases, useful error estimates for finite truncations of the  series, for example Giorgilli and Galgani~\cite{Giorgilli:1985vq}, who prove an estimate of \emph{Nekhoroshev} type~\cite{Nekhoroshev:1971}. 
By truncating the series to order $r$, they show that the error due to the normal form approximation can be controlled to within $\bigO{\sigma}$ for times of $\bigO{\sigma^{-r}}$. Further, as $\sigma$, the order $r$ can be chosen in order to control the error over \emph{exponentially} long time scales in $1/\sigma$.
  Using the normal form approximation introduces approximations into the fundamental solution operator of the finite-dimensional system, which must be controlled in order to verify the bounds~\eqref{Tdelta} and~\eqref{Mbound} which are discussed in Section~\ref{sec:rescaled}. The time scales on which these estimates are proven must be reconciled with the time scales on which the normal form approximation is valid.

\bibliographystyle{firstinitials} %plain  %unsrt
\bibliography{gmw-nls}

\end{document}